\numberwithin{equation}{section}
\theoremstyle{plain}
\newtheorem{thm}[equation]{Theorem}
\newtheorem*{thm*}{Theorem}
\newtheorem{prop}[equation]{Proposition}
\newtheorem{lemma}[equation]{Lemma}
\newtheorem{conj}[equation]{Conjecture}
\theoremstyle{definition}
\theoremstyle{remark}
\newtheorem{rem}[equation]{Remark}
\newcommand{\Fq}{{\mathbb{F}_q}}
\newcommand{\U}{{\mathbb{U}}}
\DeclareMathOperator{\rk}{Rank}
\def\nmid{\mathrel{\mathchoice{\not|}{\not|}{\kern-.2em\not\kern.2em|}{\kern-.2em\not\kern.2em|}}}
\newcommand{\1}{{\bf 1}}
\renewcommand{\mod}[1]{{\ifmmode\text{\rm\ (mod~$#1$)}\else\discretionary{}{}{\hbox{ }}\rm(mod~$#1$)\fi}}
\newcommand{\li}{{\rm li\kern1pt}}
\newcommand{\A}{{\mathcal A}}
\newcommand{\B}{{\mathcal B}}
\newcommand{\e}{{\rm e}}
\newcommand{\C}{{\mathbb C}}
\newcommand{\R}{{\mathcal R}}
\newcommand{\Z}{{\mathbb Z}}
\begin{document}

\title{On balanced subgroups of the multiplicative group}

\author{Carl Pomerance}
\author{Douglas Ulmer}

\address{Department of Mathematics, Dartmouth College, Hanover, NH~~03755}
\email{carl.pomerance@dartmouth.edu}

\address{School of Mathematics, Georgia Institute of Technology, Atlanta, GA~~30332}
\email{douglas.ulmer@math.gatech.edu}

\thanks{The first author was partially supported by NSF grant DMS-1001180.}
\subjclass[2010]{Primary 11N37; Secondary 11G05}

\dedicatory{In memory of Alf van der Poorten}

\begin{abstract}
  A subgroup $H$ of $G=(\Z/d\Z)^\times$ is called \emph{balanced} if
  every coset of $H$ is evenly distributed between the lower and upper
  halves of $G$, i.e., has equal numbers of elements with
  representatives in $(0,d/2)$ and $(d/2,d)$.  This notion has
  applications to ranks of elliptic curves.  We give a simple
  criterion in terms of characters for a subgroup $H$ to be balanced,
  and for a fixed integer $p$, we study the distribution of
  integers $d$ such that the cyclic subgroup of $(\Z/d\Z)^\times$
  generated by $p$ is balanced.
\end{abstract}

\maketitle

\section{Introduction}

Let $d>2$ be an integer and consider $\U_d=(\Z/d\Z)^\times$, the group of
units modulo $d$.  Let $A_d$ be the first half of $\U_d$; that is, $A_d$
consists of residues with a representative in $(0,d/2)$.  Let
$B_d=\U_d\setminus A_d$ be the second half of $\U_d$.  We say a subgroup
$H$ of $\U_d$ is {\it balanced} if for each $u\in \U_d$ we have $|uH\cap
A_d|=|uH\cap B_d|$; that is, each coset of $H$ has equally many
members in the first half of $\U_d$ as in the second half.

Our interest in this notion stems from the following result.

\begin{thm}[\cite{CHU}]\label{thm:CHU}
  Let $p$ be an odd prime number, let $\Fq$ be the finite field of
  cardinality $q=p^f$, and let $\Fq(u)$ be the rational function
  field over $\Fq$.  Let $d$ be a positive integer not divisible by
  $p$, and for $e$ a divisor of $d$ write $\langle p\rangle_e$ for the cyclic subgroup of
  $(\Z/e\Z)^\times$ generated by $p$.  Let $E_d$ be the elliptic curve over $\Fq(u)$ defined by
$$y^2=x(x+1)(x+u^d).$$
Then we have
$$\rk E_d(\Fq(u))=\sum_{\substack{e|d\\ e>2}}
\begin{cases}
\frac{\varphi(e)}{l_q(e)}&\text{if $\langle p\rangle_e$ is balanced,}\\
0&\text{if not.}
\end{cases}$$
Here $\varphi$ is Euler's function and $l_q(e)$ is the order of $q$ in
$(\Z/e\Z)^\times$.
\end{thm}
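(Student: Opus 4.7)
The plan is to compute the rank via the L-function of $E_d$ over $\Fq(u)$, using the $\mu_d$-symmetry of the family to decompose the problem character-by-character.

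First, identify the rank with the order of vanishing at $s=1$ of the Hasse--Weil L-function $L(E_d/\Fq(u),s)$. For the elliptic surfaces attached to $E_d$ this equality is known: each such surface is dominated by a Fermat surface, on which Tate's conjecture (equivalent to BSD in this setting) is established. Since $L(E_d/\Fq(u),s)$ is a polynomial in $q^{-s}$, it suffices to compute the multiplicity of $q$ as an eigenvalue of geometric Frobenius on the relevant $\ell$-adic cohomology group.

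Second, $E_d$ is the pullback of the Legendre-type family $y^2=x(x+1)(x+\lambda)$ over $\Fq(\lambda)$ along the cyclic cover $\lambda=u^d$. Since $p\nmid d$, the group $\mu_d$ acts \'etale-locally, yielding a decomposition
\begin{equation*}
H^1\bigl(\mathbb{P}^1_{\overline{\Fq}},j_*R^1\pi_*\mathbb{Q}_\ell\bigr)=\bigoplus_\chi H_\chi
\end{equation*}
indexed by characters $\chi$ of $\mu_d(\overline{\Fq})$, where $\pi$ is the structural map of the associated elliptic surface, and a corresponding factorization of the L-function. Characters of order $1$ and $2$ produce L-factors coming from the Legendre curve over $\Fq(\lambda)$ and its quadratic twist, each of rank zero, so they contribute nothing --- explaining the restriction $e>2$.

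Third, for $\chi$ of exact order $e>2$, show that $H_\chi$ is one-dimensional and that $\mathrm{Frob}_q^{l_q(e)}$ acts by a product of Jacobi sums built from $\chi$ and the quadratic character. Since $\mathrm{Frob}_q$ cyclically permutes the characters in the Galois orbit $\{\chi,\chi^q,\dots,\chi^{q^{l_q(e)-1}}\}$, its eigenvalues on the orbit's $l_q(e)$-dimensional invariant subspace are the $l_q(e)$-th roots of this Jacobi-sum product; exactly one of them equals $q$ if and only if the product itself equals $q^{l_q(e)}$. Stickelberger's theorem (or the Gross--Koblitz formula), rephrased via fractional parts $\{p^i a/e\}$, translates this identity into the condition that every coset of $\langle p\rangle_e$ in $(\Z/e\Z)^\times$ splits evenly between $(0,e/2)$ and $(e/2,e)$ --- i.e., $\langle p\rangle_e$ is balanced. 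With $\varphi(e)/l_q(e)$ such orbits, each contributing $1$ or $0$ according as balancedness holds, summation yields the stated formula.

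The main obstacle is the Stickelberger translation in step three: converting ``Jacobi-sum product equals exactly $q^{l_q(e)}$'' into the elementary balanced condition requires both the $p$-adic valuation statement (to pin down the modulus) and a sign/root-of-unity computation (to rule out spurious rotations of the eigenvalues). The quadratic character, an artifact of the specific Legendre shape of $E_d$, is what causes the half-interval partition of $(\Z/e\Z)^\times$ to appear as the natural condition, motivating the definition of \emph{balanced}.
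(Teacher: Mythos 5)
There is nothing in this paper to compare your argument against: Theorem~\ref{thm:CHU} is quoted from the reference \cite{CHU} (``Explicit points on the Legendre curve III,'' listed as in preparation) and is used here as a black box; the present paper proves results \emph{about} the balanced condition, not the rank formula itself. So your proposal can only be judged on its own terms and against the general strategy of the cited work, which it does track: relate $E_d$ to the Legendre family pulled back along $\lambda=u^d$, invoke the Tate/BSD equality of rank with the order of vanishing of the $L$-function (available because the associated surface is dominated by nice surfaces), decompose cohomology under the $\mu_d$-action, identify Frobenius eigenvalues with Jacobi sums, and use Stickelberger to decide when an eigenvalue equals $q$.

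As a proof, however, the proposal has real gaps precisely where the theorem's content lies. The entire equivalence ``product of Jacobi sums $=q^{l_q(e)}$ if and only if $\langle p\rangle_e$ is balanced'' is asserted, not derived: Stickelberger (or Gross--Koblitz) gives you the $p$-adic valuations of the Gauss/Jacobi sums in terms of sums of fractional parts $\{p^ia/e\}$, and you must (a) show that the valuation condition amounts to every coset of $\langle p\rangle_e$ splitting evenly between $(0,e/2)$ and $(e/2,e)$ --- this is where the quadratic character's half-integer shifts enter and where the computation is delicate --- and (b) separately eliminate the residual root-of-unity ambiguity, since an algebraic number with the right absolute values and the right $p$-adic valuation is only forced to be $q$ times a root of unity; pinning that root of unity to $1$ (typically via the action of $\mathrm{Gal}(\mathbb{Q}(\mu_{e})/\mathbb{Q})$ or an explicit sign computation) is a step you name but do not supply. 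Likewise, the claims that each $H_\chi$ is one-dimensional, that the order-$1$ and order-$2$ characters contribute rank zero, and that Tate's conjecture applies to this particular surface all require verification specific to $y^2=x(x+1)(x+u^d)$ (e.g., exhibiting the domination or a Shioda--Delsarte/Berger-type construction), none of which is in the sketch. In short: the architecture is right and matches what one expects from \cite{CHU}, but the decisive arithmetic step (the Stickelberger-to-balanced translation, including the sign issue) is exactly what is missing.
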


A few simple observations are in order.  It is easy to see that
$\langle-1\rangle$ is a balanced subgroup of $\U_d$.  It is also
easy to see that if $4\mid d$, then $\langle\frac12d+1\rangle$ is a
balanced subgroup of $\U_d$.  In addition, if $H$ is a balanced subgroup
of $\U_d$ and $K$ is a subgroup of $\U_d$ containing $H$, then $K$ is
balanced as well.  Indeed, $K$ is a union of $[K:H]$ cosets of $H$, so
for each $u\in \U_d$, $uK$ is a union of $[K:H]$ cosets of $H$, each
equally distributed between the first half of $\U_d$ and the second
half.  Thus, $uK$ is also equally distributed between the first half
and the second half.

It follows that if some power of $p$ is congruent to $-1$ modulo $d$
and if $q\equiv1\pmod d$, then the theorem implies that $\rk E(\Fq(u))=d-2$
if $d$ is even and $d-1$ if $d$ is odd.  The rank of $E$ when some
power of $p$ is $-1$ modulo $d$ was first discussed in \cite{U-L1},
and with hindsight it could have been expected to be large from
considerations of ``supersingularity.''  The results of \cite{CHU}
show, perhaps surprisingly, that there are many other classes of $d$
for which high ranks occur.  Our aim here is to make this observation
more quantitative.

More precisely, the aim of this paper is to investigate various
questions about balanced pairs $(p,d)$, i.e., pairs such that $\langle
p\rangle$ is a balanced subgroup of $(\Z/d\Z)^\times$.  In particular,
we give a simple criterion in terms of characters for $(p,d)$ to be
balanced (Theorem~\ref{thm:char-criterion}), and we use it to
determine all balanced subgroups of order 2
(Theorem~\ref{thm:order2}).  We also investigate the distribution for
a fixed $p$ of the set of $d$'s such that $(p,d)$ is balanced
(Theorems~\ref{thm:dist0},~\ref{thm:dist1}).  
We find that when $p$ is odd, the divisors
of $p^n+1$ are not the largest contributor to this set.  Finally, we
investigate the average rank and typical rank given by
Theorem~\ref{thm:CHU} for fixed $q$ and varying $d$.

\section{Balanced subgroups and characters}
In this section we write $G$ (rather than $\U_d$) for
$(\Z/d\Z)^\times$.  We also write $A$ for $A_d$ as above and similarly
for $B$, so that $G$ is the disjoint union $A\cup B$.

We write $\1_A$ for the characteristic function of $A\subset G$ and
similarly for $\1_B$.  Let $f:G\to\C$ be the sum over $H$ of translates
of $\1_A-\1_B$:
\begin{align*}
f(g)&=\sum_{h\in H} \left(\1_A(gh)-\1_B(gh)\right)\\
&=\#(gH\cap A)-\#(gH\cap B).
\end{align*}
By definition, $H$ is balanced if and only if $f$ is identically zero.

We write $\hat G$ for the set of complex characters of $G$, and 
we expand $f$ in terms of these characters:
$$f=\sum_{\chi\in \hat G}\hat f(\chi)\chi$$
where
$$\hat f(\chi)=\frac1{\varphi(d)}\sum_{g\in G}f(g)\chi^{-1}(g).$$

It is easy to see that $\hat f(\chi_{triv})=0$.  Noting that
$\1_A-\1_B=2\1_A-\1_G$, for $\chi$ non-trivial we find that
$$\hat f(\chi^{-1})=\frac2{\varphi(d)}\left(\sum_{h \in H} \chi(h)\right)
\left(\sum_{a\in A} \chi(a)\right).$$

Note that $\sum_{h \in H} \chi(h)$ is zero if and only if the
restriction of $\chi$ to $H$ is non-trivial.  We introduce the
notation
$$c_\chi=\sum_{a\in A} \chi(a).$$
We view $\chi$ as a Dirichlet character, so that we also have
$$c_\chi=\sum_{0<a<d/2}\chi(a).$$

As usual, we say $\chi$ is even if $\chi(-1)=1$ and $\chi$ is odd if
$\chi(-1)=-1$.  Note that if $\chi$ is even and non-trivial, then
$$c_\chi=\frac12\sum_{g\in G}\chi(g)=0.$$

This discussion yields the following characterization of balanced subgroups:

\begin{thm}\label{thm:char-criterion}
With notation as above, we have that $H$ is balanced if and only if
$c_\chi=0$ for every odd character $\chi$ of $G$ whose restriction to $H$
is trivial.
\end{thm}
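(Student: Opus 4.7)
The essential machinery is already in place; the plan is simply to organize the equivalences carefully. I would argue as follows.

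First, observe that since the characters of $G$ form an orthogonal basis of the space of complex-valued functions on $G$, the function $f$ is identically zero if and only if every Fourier coefficient $\hat f(\chi)$ vanishes. Thus, by the definition of balanced, $H$ is balanced if and only if $\hat f(\chi) = 0$ for all $\chi \in \hat G$.

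Next, I would case-analyze the formula
$$\hat f(\chi^{-1}) = \frac{2}{\varphi(d)} \left(\sum_{h \in H} \chi(h)\right) \left(\sum_{a \in A} \chi(a)\right)$$
to determine which characters can possibly contribute. The inner sum $\sum_{h \in H} \chi(h)$ equals $|H|$ when $\chi|_H$ is trivial and $0$ otherwise, so only characters with $\chi|_H$ trivial can give a nonzero coefficient. Among those, the trivial character gives $\hat f(\chi_{triv}) = 0$ by the earlier observation, and any non-trivial even character gives $c_\chi = 0$. Therefore the only characters whose Fourier coefficient is not forced to vanish are the odd characters $\chi$ with $\chi|_H$ trivial, and for these $\hat f(\chi^{-1}) = 0$ if and only if $c_\chi = 0$.

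To finish, I would note that the correspondence $\chi \mapsto \chi^{-1}$ is a bijection on $\hat G$ which preserves parity (since $\chi^{-1}(-1) = \chi(-1)^{-1} = \chi(-1)$) and also preserves the property of being trivial on $H$. Consequently, the condition ``$\hat f(\chi^{-1}) = 0$ for every odd $\chi$ with $\chi|_H$ trivial'' is equivalent to ``$c_\chi = 0$ for every odd $\chi$ with $\chi|_H$ trivial,'' giving the stated criterion.

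There is really no hard step here: the only thing to be careful about is the bookkeeping between $\chi$ and $\chi^{-1}$ and making sure that the two special cases (trivial character, non-trivial even character) are correctly eliminated so that the criterion reduces to odd characters trivial on $H$.
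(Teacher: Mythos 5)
Your proposal is correct and follows essentially the same route as the paper, which derives the criterion directly from the Fourier expansion of $f$, the vanishing of the coefficient at the trivial character, the vanishing of $c_\chi$ for non-trivial even $\chi$, and the orthogonality of $\chi$ on $H$; the paper simply leaves this assembly implicit (the theorem is stated with no separate proof beyond the preceding discussion). Your extra remark that $\chi\mapsto\chi^{-1}$ preserves parity and triviality on $H$ is a correct piece of bookkeeping that the paper takes for granted.
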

\qed

As an example, note that if $H=\langle-1\rangle$, then there are no odd
characters trivial on $H$ and so the theorem implies that $H$ is balanced.

We now give a non-vanishing criterion for $c_\chi$.

\begin{lemma}
If $\chi$ is a primitive, odd character of $G$, then $c_\chi\neq0$.
\end{lemma}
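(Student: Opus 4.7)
The plan is to exhibit a nonzero-scalar identity between $c_\chi$ and the first moment $S := \sum_{a=1}^{d-1} a\chi(a)$, and then invoke Dirichlet's theorem that $L(1,\bar\chi) \ne 0$. Via the standard formula $L(1,\bar\chi) = -i\pi S/(d\,\tau(\chi))$ for primitive odd $\bar\chi$, nonvanishing of $L(1,\bar\chi)$ forces $S \ne 0$, so the identity will give $c_\chi \ne 0$.

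The identity I would aim for is $(\bar\chi(2) - 2)\,S = d\,c_\chi$; its leading coefficient is nonzero since $|\bar\chi(2)| \le 1 < 2$. For $d$ odd, I would prove it combinatorially using the bijection $a \mapsto 2a \bmod d$ of $(\Z/d\Z)^\times$. Substituting $a = 2a' \bmod d$ into $S$, splitting the new variable into the ranges $a' < d/2$ and $a' > d/2$ (so that $a = 2a'$ or $2a' - d$ respectively), and applying both $\chi(2a' \bmod d) = \chi(2)\chi(a')$ and $\sum_{a > d/2}\chi(a) = -c_\chi$ (valid for odd $\chi$) yields $S = 2\chi(2) S + d\chi(2)\,c_\chi$. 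Multiplying by $\bar\chi(2)$ and using $\chi(2)\bar\chi(2) = 1$ gives the stated identity.

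For $d$ even, primitivity forces $4 \mid d$ and $\chi(2) = 0$, so the target collapses to $c_\chi = -2S/d$. Here the cleanest route is through Gauss sums: expanding $\chi(a) = \tau(\bar\chi)^{-1} \sum_b \bar\chi(b)\, e^{2\pi i ab/d}$ in the definition of $c_\chi$ and swapping sums, the inner partial geometric series $\sum_{0<a<d/2} e^{2\pi iab/d}$ can be evaluated explicitly. Only odd $b$ contribute (since $\bar\chi(b)$ vanishes on even $b$) and the inner sum simplifies to $i\cot(\pi b/d)$; comparing with the classical cotangent formula $L(1,\bar\chi) = \frac{\pi}{2d}\sum_b \bar\chi(b)\cot(\pi b/d)$ for odd characters produces $c_\chi = 2id\,L(1,\bar\chi)/(\pi\,\tau(\bar\chi))$. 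Substituting $L(1,\bar\chi) = -i\pi S/(d\,\tau(\chi))$ and using $\tau(\chi)\tau(\bar\chi) = -d$ for odd primitive $\chi$ collapses the expression to $c_\chi = -2S/d$, confirming the identity.

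The main obstacle is the parity-of-$d$ case split: the combinatorial substitution fails for $d$ even because $a \mapsto 2a$ is no longer a bijection of $(\Z/d\Z)^\times$, while the Gauss sum computation is cleanest precisely when $4 \mid d$ so that only one parity of $b$ contributes to the Fourier expansion. For $d$ odd a direct Fourier approach would require splitting by parity of $b$ and dealing with both $\sin$ and $\cos$ contributions, which seems less clean than the combinatorial substitution. Once the identity $(\bar\chi(2)-2)S = d\,c_\chi$ is established in both regimes, Dirichlet's theorem finishes the proof.
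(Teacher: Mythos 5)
Your proposal is correct and follows essentially the same route as the paper: both reduce nonvanishing of $c_\chi$ to Dirichlet's theorem $L(1,\bar\chi)\neq0$ via the classical identity relating $L(1,\bar\chi)$ to $c_\chi$ (the paper quotes $L(1,\chi^{-1})=\pi i\tau(\chi^{-1})c_\chi/\bigl(d(\chi^{-1}(2)-2)\bigr)$ from the literature, which is exactly your identity $(\bar\chi(2)-2)S=d\,c_\chi$ combined with $L(1,\bar\chi)=-i\pi S/(d\,\tau(\chi))$). The only difference is that you derive this identity (by the doubling substitution for $d$ odd and the Gauss-sum/cotangent computation for $4\mid d$) rather than citing it, and both derivations check out.
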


\begin{proof}
  Under the hypotheses on $\chi$, the classical evaluation of
  $L(1,\chi)$ leads to the formula
$$L(1,\chi^{-1})=\frac{\pi
  i\tau(\chi^{-1})}{d(\chi^{-1}(2)-2)}c_\chi$$ 
where $\tau(\chi^{-1})$ is a Gauss sum. 
(See, e.g., \cite[pp.~200--201]{Marcus} or \cite[Theorem~9.21]{MV}, though
there is a small typo in the second reference.)
By the theorem of Dirichlet, $L(1,\chi^{-1})\neq0$ and so $c_\chi\neq0$.
\end{proof}

In light of the lemma, we should consider imprimitive characters.

\begin{lemma}\label{lemma:imprimitive}
Suppose that $\ell$ is a prime number dividing $d$ and set
$d'=d/\ell$.  Suppose also that $\chi$ is a non-trivial character modulo $d$
induced by a character $\chi'$ modulo $d'$.  
If $\ell=2$, then
$c_\chi=-\chi'(2)c_{\chi'}$.
If $\ell$ is odd, then
$c_\chi=(1-\chi'(\ell))c_{\chi'}$.
Here we employ the usual convention that $\chi'(\ell)=0$ if $\ell\mid d'$.
\end{lemma}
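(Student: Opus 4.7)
The plan is to compute $c_\chi$ by reducing modulo $d'$ and counting how many full periods of $\chi'$ fit into the range $(0,d/2)$. Since $\chi(a)=\chi'(a)$ when $\gcd(a,d)=1$ and $\chi(a)=0$ otherwise,
$$c_\chi=\sum_{\substack{0<a<d/2\\ \gcd(a,d)=1}}\chi'(a).$$
The condition $\gcd(a,d)=1$ factors as $\gcd(a,d')=1$ together with $\ell\nmid a$, so inclusion--exclusion gives $c_\chi=S_1-S_\ell$, where $S_1$ is the sum with only the condition $\gcd(a,d')=1$ and $S_\ell$ imposes in addition $\ell\mid a$.

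For $S_\ell$, I would substitute $a=\ell b$. When $\ell\nmid d'$, the condition $\gcd(\ell b,d')=1$ reduces to $\gcd(b,d')=1$, and $b$ ranges over $(0,d/(2\ell))=(0,d'/2)$, so $S_\ell=\chi'(\ell)c_{\chi'}$. When $\ell\mid d'$ the sum $S_\ell$ is empty and $\chi'(\ell)=0$, so the identity still holds under the stated convention. To evaluate $S_1$, one separates the two cases. For $\ell=2$, $d/2=d'$ is a single full period of $\chi'$, so $S_1=\sum_{a\in(\Z/d'\Z)^\times}\chi'(a)=0$, using that $\chi'$ is non-trivial mod $d'$ (inherited from the non-triviality of $\chi$ via the surjection $(\Z/d\Z)^\times\twoheadrightarrow(\Z/d'\Z)^\times$). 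Combining yields $c_\chi=-\chi'(2)c_{\chi'}$. For $\ell$ odd, $d/2=\ell d'/2$, and since $(\ell-1)/2\in\Z$ the interval $(0,(\ell-1)d'/2)$ breaks into $(\ell-1)/2$ full periods of length $d'$ (each contributing $0$ to $S_1$), while the remainder $((\ell-1)d'/2,\ell d'/2)$ has length $d'/2$ and, after translating by the multiple $(\ell-1)d'/2$ of $d'$, matches $(0,d'/2)$ term-by-term; hence $S_1=c_{\chi'}$ and $c_\chi=(1-\chi'(\ell))c_{\chi'}$.

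I do not expect any serious obstacles; the argument is a direct manipulation of sums. The main care required is in the bookkeeping around the edge cases $\ell\mid d'$, where $\chi'(\ell)=0$ by convention and the "multiples of $\ell$" sum $S_\ell$ vanishes genuinely, and in verifying that $(\ell-1)d'/2$ really is an integer multiple of $d'$, which is precisely where the odd-$\ell$ hypothesis enters.
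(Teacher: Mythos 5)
Your proof is correct and takes essentially the same route as the paper's: both remove the coprimality-to-$\ell$ condition by subtracting the subsum over multiples of $\ell$ via the substitution $a=\ell b$, and evaluate the remaining sum using the periodicity of $\chi'$ and the vanishing of complete sums of the non-trivial character $\chi'$. Your uniform handling of the $\ell\mid d'$ case through the convention $\chi'(\ell)=0$, and the explicit decomposition of $(0,\ell d'/2)$ into $(\ell-1)/2$ full periods plus a half period, are just slightly more detailed renderings of the paper's case-by-case computation.
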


\begin{proof}
First suppose $\ell=2$.  We have
$$c_\chi=\sum_{\substack{a<d/2\\\gcd(a,d)=1}}\chi(a)=
\sum_{\substack{a<d'\\\gcd(a,2d')=1}}\chi'(a).$$
If $2\mid  d'$, this is a complete character sum and so vanishes.
If $2\nmid d'$, then
\begin{align*}
\sum_{\substack{a<d'\\\gcd(a,2d')=1}}\chi'(a)
&=\sum_{\substack{a<d'\\\gcd(a,d')=1}}\chi'(a)
-\sum_{\substack{a<d'/2\\\gcd(a,d')=1}}\chi'(2a)\\
&=-\sum_{\substack{a<d'/2\\\gcd(a,d')=1}}\chi'(2a)\\
&=-\chi'(2)c_{\chi'}
\end{align*}
as desired.

Now assume that $\ell$ is odd.  We have
$$c_\chi=\sum_{\substack{a<d/2\\\gcd(a,d)=1}}\chi(a)=
\sum_{\substack{a<\ell d'/2\\\gcd(a,\ell d')=1}}\chi'(a).$$
If $\ell\mid d'$, then
$$\sum_{\substack{a<\ell d'/2\\\gcd(a,\ell d')=1}}\chi'(a)
=\sum_{\substack{a<d'/2\\\gcd(a,d')=1}}\chi'(a)
=c_{\chi'}.$$
If $\ell\nmid d'$, then
\begin{align*}
\sum_{\substack{a<\ell d'/2\\\gcd(a,\ell d')=1}}\chi'(a)
&=\sum_{\substack{a<\ell d'/2\\\gcd(a,d')=1}}\chi'(a)
-\sum_{\substack{a<d'/2\\\gcd(a,d')=1}}\chi'(\ell a)\\
&=\sum_{\substack{a<d'/2\\\gcd(a,d')=1}}\chi'(a)
-\chi'(\ell)\sum_{\substack{a<d'/2\\\gcd(a,d')=1}}\chi'(a)\\
&=(1-\chi'(\ell))c_{\chi'}
\end{align*}
as desired.
\end{proof}

Applying the lemma repeatedly, we arrive at the following non-vanishing
criterion.

\begin{prop}\label{prop:non-vanishing}
  Suppose that $\chi$ is an odd character modulo $d$ induced by a
  primitive character $\chi'$ modulo $d'$.  Then $c_\chi\neq0$ if and
  only if the following two conditions both hold: \textup{(}i\textup{)} $4\nmid d$ or
  $d/d'$ is odd; and \textup{(}ii\textup{)} for every odd prime $\ell$ which divides $d$
  and does not divide $d'$, we have $\chi'(\ell)\neq1$.
\end{prop}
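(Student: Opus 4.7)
The plan is to apply Lemma~\ref{lemma:imprimitive} repeatedly to express $c_\chi$ as a product of scalars times $c_{\chi'}$. Write $d/d'$ as an ordered (multi)product of primes $\ell_0 \ell_1 \cdots \ell_{k-1}$, set $d_0 = d$ and $d_{i+1} = d_i/\ell_i$, and let $\chi_i$ denote the character mod $d_i$ induced by $\chi'$, so $\chi_0 = \chi$ and $\chi_k = \chi'$. Each $\chi_i$ is odd since $\chi_i(-1) = \chi'(-1) = -1$, so Lemma~\ref{lemma:imprimitive} applies at every step and yields $c_\chi = r_0 r_1 \cdots r_{k-1} \cdot c_{\chi'}$, with $r_i = -\chi_{i+1}(2)$ when $\ell_i = 2$ and $r_i = 1 - \chi_{i+1}(\ell_i)$ when $\ell_i$ is odd. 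Since $\chi'$ is primitive and odd, the preceding lemma gives $c_{\chi'} \neq 0$, so $c_\chi \neq 0$ iff every $r_i$ is nonzero.

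For each odd prime $\ell$ appearing in $d/d'$, I peel all $v_\ell(d) - v_\ell(d')$ copies consecutively. At every such step except possibly the last, $d_{i+1}$ still contains a factor of $\ell$, so $\chi_{i+1}(\ell) = 0$ and $r_i = 1$. At the final peel of $\ell$, $v_\ell(d_{i+1}) = v_\ell(d')$; if $v_\ell(d') > 0$ the factor is again $1$, while if $v_\ell(d') = 0$ it equals $1 - \chi'(\ell)$, which is nonzero iff $\chi'(\ell) \neq 1$. Hence the contributions from odd primes are all nonzero iff condition~(ii) holds.

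For $\ell = 2$, the factor $r_i = -\chi_{i+1}(2)$ vanishes iff $2 \mid d_{i+1}$. Writing $a = v_2(d)$ and $b = v_2(d')$, a short case analysis shows that either no peel at $2$ occurs (when $a = b$), or exactly one peel occurs and produces an odd $d_{i+1}$ (when $a = 1$ and $b = 0$, giving factor $-\chi'(2) \neq 0$), or some peel produces an even $d_{i+1}$ and forces $c_\chi = 0$. The last possibility is precisely the case $a \geq 2$ and $a > b$, equivalently $4 \mid d$ and $d/d'$ even, which is the negation of condition~(i). Combining both analyses, $c_\chi \neq 0$ iff (i) and (ii) both hold.

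The main bookkeeping obstacle is keeping track of which peel at each prime actually contributes a potentially vanishing factor; the argument stays clean because we may choose any peel order and because every intermediate character $\chi_i$ is induced by the same primitive $\chi'$, so each value $\chi_{i+1}(\ell_i)$ is determined solely by whether $\ell_i \mid d_{i+1}$ and by $\chi'(\ell_i)$.
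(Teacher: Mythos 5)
Your proof is correct and follows exactly the route the paper intends: it peels the primes of $d/d'$ one at a time via Lemma~\ref{lemma:imprimitive}, combines the resulting factors with the non-vanishing of $c_{\chi'}$ for the primitive odd character $\chi'$, and matches the vanishing of the $\ell=2$ and odd-prime factors with conditions (i) and (ii). The paper states this proposition with only the remark ``applying the lemma repeatedly''; your write-up simply supplies the bookkeeping that the paper leaves implicit, and it does so accurately.
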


As an example, suppose that $4\mid d$ and $H=\langle\frac12d+1\rangle$.
Note that 
$$(\Z/d\Z)^\times/\langle\textstyle{\frac12}d+1\rangle
\cong(\Z/\textstyle{\frac12}d\Z)^\times.$$ 
Thus, if $\chi$ is an odd character modulo $d$ and $\chi(\frac12d+1)=1$,
then the conductor $d'$ of $\chi$ divides $d/2$.  This shows that
$d/d'$ is even and so condition (i) of the proposition fails and
$c_\chi=0$.  Therefore $H$ is balanced.

\section{Balanced subgroups of small order}
In this section, we discuss balanced subgroups of small order.  We
have already seen that a subgroup of $G$ which contains $-1$ or
$\frac12d+1$ is balanced.  We will show that in a certain sense small
balanced subgroups are mainly controlled by these balanced subgroups
of order 2.

\begin{thm}\label{thm:small-order}
  For every positive integer $n$ there is an integer $d(n)$ such that
  if $d>d(n)$ and $H$ is a balanced subgroup of $G=(\Z/d\Z)^\times$ of
  order $n$, then either $-1\in H$ or $4\mid d$ and $\frac12d+1\in H$.
\end{thm}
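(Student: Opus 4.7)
The plan is to argue the contrapositive: if $H \subset G = (\Z/d\Z)^\times$ is balanced of order $n$, $-1 \notin H$, and (when $4 \mid d$) also $\tfrac12 d+1 \notin H$, then $d$ is bounded by a function of $n$. The mechanism is to produce, for $d$ large, an odd character $\chi \in H^\perp$ with $c_\chi \neq 0$, contradicting Theorem~\ref{thm:char-criterion}.

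Since $-1 \notin H$, the dual subgroup $H^\perp \subset \widehat{G}$ has exactly $\varphi(d)/(2n)$ odd characters, each of which must satisfy $c_\chi = 0$ by Theorem~\ref{thm:char-criterion}. By Proposition~\ref{prop:non-vanishing}, every such $\chi$ satisfies either (a) $4 \mid d$ and $d/d_\chi$ is even (equivalently, $\chi$ is trivial on $\tfrac12 d+1$), or (b) there is an odd prime $\ell \mid d$ with $\ell \nmid d_\chi$ and $\chi'(\ell) = 1$. Under our hypothesis, (a) is contributed only by characters in $\langle H, \tfrac12 d+1\rangle^\perp$ that are odd, of size at most $\varphi(d)/(4n)$; hence at least $\varphi(d)/(4n)$ odd characters in $H^\perp$ must satisfy (b).

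To bound the latter, for each odd prime $\ell \mid d$ let $L_\ell \subset G$ be the preimage of $\langle \bar\ell\rangle$ under the natural surjection $G \to G^{(\ell)} := (\Z/(d/\ell^{v_\ell(d)})\Z)^\times$. Condition (b) at $\ell$ is equivalent to $\chi|_{L_\ell} = 1$, so the characters satisfying (b) lie in the union $\bigcup_\ell (L_\ell H)^\perp \cap \{\text{odd}\}$. The union bound yields
$$\frac{\varphi(d)}{4n}\ \le\ \sum_{\substack{\ell \mid d\\ \ell\text{ odd}}}\frac{\varphi(d)\,|L_\ell\cap H|}{2\,\varphi(\ell^{v_\ell(d)})\,r_\ell\,n},$$
where $r_\ell$ is the order of $\bar\ell$ in $G^{(\ell)}$. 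Using $|L_\ell \cap H| \leq n$ and $\varphi(\ell^{v_\ell(d)}) \geq \ell-1$, this rearranges to $\sum_\ell 1/((\ell-1)r_\ell) \geq 1/(2n)$.

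The main obstacle is turning this inequality into an honest bound on $d$. A helpful observation is that $r_\ell = 1$ for at most one odd prime $\ell \mid d$: indeed, $r_\ell = 1$ means $\ell \equiv 1 \pmod{d/\ell^{v_\ell(d)}}$, forcing $d/\ell^{v_\ell(d)} < \ell$, and two distinct odd primes cannot both satisfy this. For the remaining primes $r_\ell \geq 2$, so the real difficulty lies in controlling the residual sum over odd primes $\ell \mid d$ with $r_\ell$ still small: when $d$ has many small odd prime factors, the crude union bound becomes too weak. To handle this regime one needs either an inclusion-exclusion refinement of the above estimate (exploiting that a character satisfying (b) simultaneously for several primes $\ell$ lies in a much smaller subgroup), or an alternative M\"obius-inversion argument directly producing a primitive odd character in $H^\perp$, which automatically has $c_\chi \neq 0$ by the lemma preceding Lemma~\ref{lemma:imprimitive}. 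Combining these regimes should produce the desired effective bound $d \leq d(n)$.
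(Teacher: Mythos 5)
Your setup is the same as the paper's (reduce via Theorem~\ref{thm:char-criterion} and Proposition~\ref{prop:non-vanishing} to producing an odd character trivial on $H$ with $c_\chi\neq0$), and your bookkeeping up to the union bound is essentially sound. But the proof has a genuine gap, and you have located it yourself: the inequality $\sum_{\ell\mid d,\ \ell\ \mathrm{odd}} 1/((\ell-1)r_\ell)\ge 1/(2n)$ does not bound $d$, and no argument is actually given to close this. Indeed the inequality can hold for arbitrarily large $d$: take $d=\ell^k m$ with $m$ fixed and $\ell$ a fixed small prime of small order modulo $m$ (e.g.\ $d=3^k\cdot 121$, where $3$ has order $5$ modulo $121$); then one term alone already exceeds $1/(2n)$ once $n\ge5$, for every $k$. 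So the ``residual sum over small primes with small $r_\ell$'' is not a technical nuisance to be handled by inclusion--exclusion; it is the entire content of the theorem, and your two suggested fixes are not carried out (and the second is doubtful as stated: a primitive odd character trivial on $H$ need not exist, e.g.\ there are no primitive characters at all when $d\equiv2\pmod4$, and in general one can only force the conductor to contain the primes $\ell$ with $G_\ell/H^+$ nontrivial).

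The paper's proof replaces your global count by a structural two-step construction that is exactly the missing ingredient. First, working with $H^+=\langle H,-1\rangle$ (and $\frac12d+1$ if $4\mid d$) and using that each $G_\ell=(\Z/\ell^{e_\ell}\Z)^\times$ is cyclic, it shows the image of $H^+$ in $G_\ell$ has order dividing $n$ for odd $\ell$; hence the ``bad'' primes, those with $G_\ell/H^+$ trivial (the only ones at which you cannot simply twist to put $\ell$ into the conductor), satisfy $\varphi(\ell)\mid n$ and so lie in a finite set depending only on $n$, not on $d$. Second, since $d$ is large, some prime power $\ell^e\exdiv d$ is large, so the cyclic group $G_\ell/H^+$ is large and each of the finitely many bad primes has large order in it; a short counting argument for characters of a single cyclic group (avoiding the finitely many values $\chi_1^{-1}(r)$, $r\in S_1$, while keeping the $\ell$-component nontrivial) then produces the required twist $\psi$ and hence $\chi=\chi_1\psi$ with $c_\chi\neq0$. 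Your union bound cannot see either of these inputs, which is why it stalls precisely where the theorem's difficulty lies; to salvage your approach you would need to import both the ``$S_1\subset S_3$ is finite and depends only on $n$'' observation and the large-cyclic-component avoidance argument, at which point you have reproduced the paper's proof.
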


We can make this much more explicit for subgroups of order 2:

\begin{thm}\label{thm:order2}
  A subgroup $H=\langle h\rangle$ of order 2 is balanced if and only if $d$
  and $h$ satisfy one of the following conditions:
\begin{enumerate}
\item $h\equiv-1\pmod d$
\item $d\equiv0\pmod 4$ and $h\equiv\frac12d+1\pmod d$
\item $d=24$ and $h\equiv 17\pmod d$ or $h\equiv19\pmod d$.
\item $d=60$ and $h\equiv 41\pmod d$ or $h\equiv49\pmod d$.
\end{enumerate}
\end{thm}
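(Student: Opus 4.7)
The plan is to apply Theorem~\ref{thm:char-criterion} together with Proposition~\ref{prop:non-vanishing}. Sufficiency is cheap: case~(1) holds because no odd character is trivial on $\langle-1\rangle$; case~(2) was worked out at the end of Section~2; and cases~(3) and~(4) are two finite checks, done either by computing the cosets of $H$ in $\U_d$ directly or by listing the few odd characters modulo $d=24$ or $60$ with $\chi(h)=1$ and verifying that each is imprimitive with $c_\chi=0$ because one of Proposition~\ref{prop:non-vanishing}(i) or~(ii) fails. For example, for $d=24$, $h=17$, the constraint $\chi(17)=1$ forces the $3$-part of $\chi$ to be trivial, leaving exactly two candidate odd characters, one of conductor $4$ (killed by (i) since $24/4$ is even) and one of conductor $8$ (killed by (ii) since it sends $3\mapsto 1$).

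For the converse, assume $H=\langle h\rangle$ is balanced of order~$2$ and that $h$ is not of the form in~(1) or~(2). The goal is to exhibit an odd character $\chi$ modulo $d$ with $\chi(h)=1$ and $c_\chi\neq0$, except when $(d,h)$ is one of the four exceptions in~(3) or~(4). Write $d=2^a m$ with $m$ odd and $m=\prod_{i=1}^k \ell_i^{b_i}$, and decompose $h$ via the Chinese Remainder Theorem into its $2$-component $h_2\in\U_{2^a}$ and its $\ell_i$-components $h_{\ell_i}\in\{\pm1\}\subset\U_{\ell_i^{b_i}}$. A character modulo $d$ factors as $\chi=\chi_2\prod_i\chi_{\ell_i}$. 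The conditions ``$\chi$ odd'' and ``$\chi(h)=1$'' become two multiplicative constraints on the signs $\chi_\ell(-1)$ and $\chi_\ell(h_\ell)$; Proposition~\ref{prop:non-vanishing} adds that (i) when $a\ge2$ the $2$-part $\chi_2$ must have conductor exactly $2^a$, and (ii) at each odd prime $\ell\mid d$ at which $\chi_\ell$ is trivial, the primitive inducing character $\chi'$ must satisfy $\chi'(\ell)\neq1$.

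The construction is by cases on how $h$ differs from $-1$ and from $\frac12 d+1$. In the generic situation, where $h$ has enough flexibility across its local components, one picks a \emph{primitive} $\chi$ modulo $d$ satisfying the parity and trivial-on-$h$ constraints, so that Proposition~\ref{prop:non-vanishing} is vacuous and $c_\chi\neq0$; this immediately rules out balancedness whenever $d$ has several odd prime factors or a $2$-part with $a\ge 3$ and enough local freedom. The parameters available are the signs $\chi_\ell(-1)\in\{\pm1\}$ at each prime power, subject to the linear-over-$\mathbb F_2$ conditions coming from ``odd'' and ``$\chi(h)=1$''; one checks that at least one primitive solution exists unless one of the local components is $\U_2$, $\U_4$, or $\U_3$, where only one parity of primitive character is available. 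In the remaining tight cases the factorization of $d$ is restricted to small powers of the first few primes, one falls back on imprimitive $\chi$, and Proposition~\ref{prop:non-vanishing} turns into a finite arithmetic system on the odd primes dividing $d$ and on the $2$-adic valuation of $d$ that must admit a solution compatible with $\chi(h)=1$. Enumerating the remaining $d$ in this tight regime shows that the only pairs $(d,h)$ for which \emph{no} admissible odd $\chi$ with $c_\chi\neq 0$ exists are precisely those in~(3) and~(4).

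The main obstacle will be the finite verification in the tight regime: for each small candidate $d$ outside $\{24,60\}$ and each nontrivial involution $h\in\U_d$ other than $-1$ and (where applicable) $\frac12d+1$, we must produce an odd $\chi$ with $\chi(h)=1$ and $c_\chi\neq0$, and conversely confirm that no such $\chi$ exists for the four listed exceptions. The bookkeeping is delicate because conditions~(i) and~(ii) of Proposition~\ref{prop:non-vanishing} can each individually force $c_\chi=0$, so both mechanisms must be ruled out for every candidate $\chi$; moreover the set of candidate characters depends on the fine structure of $\U_{2^a}$ and of each $\U_{\ell_i^{b_i}}$, so the case split is not perfectly uniform and one must work through small $d$ such as $12,15,20,21,28,33,40,\dots$ individually to isolate $24$ and $60$ as the only true exceptions.
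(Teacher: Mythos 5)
Your overall strategy is the same as the paper's: both directions rest on Theorem~\ref{thm:char-criterion} and Proposition~\ref{prop:non-vanishing}, sufficiency is a short verification (your computation for $d=24$, $h=17$ is correct), and necessity is to be settled by producing, for every exceptional involution $h$ outside a finite list, an odd character trivial on $h$ with $c_\chi\neq0$. The genuine gap is in how you reach that finite list. Your claim that a primitive odd $\chi$ with $\chi(h)=1$ exists ``whenever $d$ has several odd prime factors or a $2$-part with $a\ge3$ and enough local freedom,'' and that the remaining ``tight'' cases have $d$ confined to small powers of the first few primes, is not correct: the constraint $\chi(h)=1$ only involves the local components at which $h$ is $\neq1$, so components where $h$ is $1$ contribute nothing toward satisfying it, no matter how much freedom they carry. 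Concretely, for any prime $\ell\ge5$ take $d=12\ell$ and $h\equiv1\pmod{4\ell}$, $h\equiv-1\pmod3$ (an exceptional involution). Any primitive character mod $d$ must have as its $4$- and $3$-parts the unique primitive (odd) characters mod $4$ and mod $3$, so $\chi(h)=\chi_3(-1)=-1$ and no primitive choice works, even though $d$ is arbitrarily large and has two odd prime factors. Thus the ``tight regime'' is infinite, and enumerating small $d$ such as $12,15,20,\dots$ cannot complete the argument.

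What is missing is precisely the step the paper supplies. When the $3$-part of $\chi$ is forced to be trivial, condition (ii) of Proposition~\ref{prop:non-vanishing} demands $\chi'(3)\neq1$ for the inducing primitive character $\chi'$, and one must show that any large prime-power factor $\ell^e$ of $d$ provides enough characters to arrange this while keeping the rest of the conductor (and the conditions ``odd,'' ``trivial on $h$,'' and $2$-primitivity when $4\mid d$) intact. The paper does this via the apparatus of Theorem~\ref{thm:small-order}: for $n=2$ one has $S_3=\{3\}$, so $3$ is the only prime that can be missing from the conductor; and if a high enough power of $2$ divides $d$, or $25\mid d$, or some prime $\ge7$ divides $d$, then the order of $3$ in $G_\ell/H^+$ is at least $3$, so twisting by a character of $G_\ell/H^+$ forces $\chi'(3)\neq1$ with the $\ell$-part still nontrivial. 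This is what bounds the possible exceptional $d$ (divisors of $8\cdot3\cdot5$ divisible by $3$ in the paper) and reduces the problem to the finite check producing exactly cases (3) and (4). Your proposal needs an argument of this kind --- a quantitative statement about the values at $3$ of the characters available at a large component --- before any finite verification can be invoked; as written, the reduction to a finite set is asserted rather than proved, and the assertions used to motivate it are false.
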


\begin{proof}[Proof of Theorem~\ref{thm:small-order}]
  Using Proposition~\ref{prop:non-vanishing}, we will show that if $d$
  is sufficiently large with respect to $n$, then for any subgroup
  $H\subset(\Z/d\Z)^\times$ of order $n$ which does not contain $-1$
  or $\frac12d+1$, there is a character $\chi$ which is odd, trivial on
  $H$, and with $c_\chi\neq0$.  By Theorem~\ref{thm:char-criterion},
  this implies that $H$ is not balanced.

  Note that a balanced subgroup obviously has even order, so there is
  no loss in assuming that $n$ is even.  We make this assumption for
  the rest of the proof.

  Let $H^+$ be the subgroup of $G$ generated by $H$, $-1$ and, if
  $4\mid d$, by $\frac12d+1$.  Fix a character $\chi_0$ of $G$ which is
  trivial on $H$, odd, and $-1$ on $\frac12d+1$ if $4\mid d$.  The set of all
  characters satisfying these restrictions is a homogeneous space for
  $\widehat{G/H^+}\subset\widehat G$.  We will argue that multiplying
  $\chi_0$ by a suitable $\psi\in \widehat{G/H^+}$ yields a
  $\chi=\chi_0\psi$ for which Proposition~\ref{prop:non-vanishing}
  implies that $c_\chi\neq0$.

  Note first that any character $\chi$ which is odd and, if $4\mid d$,
  has $\chi(\frac12d+1)=-1$ automatically satisfies condition (i) in
  Proposition~\ref{prop:non-vanishing}.  Indeed, if $4\mid d$, then
  the condition $\chi(\frac12d+1)=-1$ implies that $\chi$ is
  2-primitive, i.e., the conductor $d'$ of $\chi$ has $d/d'$ odd.  The
  rest of the argument relates to condition (ii) in
  Proposition~\ref{prop:non-vanishing}.

  Write $d=\prod_{\ell}\ell^{e_\ell}$ and write $G_\ell$ for
  $(\Z/\ell^{e_\ell}\Z)^\times$ so that $G\cong\prod_\ell G_\ell$.
  Let $\chi=\prod_\ell\chi_\ell$.  Note that $\ell$ divides the
  conductor of $\chi$ if and only if $\chi_\ell$ is non-trivial.

  We will sloppily write $G_\ell/H^+$ for $G_\ell$ modulo the image of
  $H^+$ in $G_\ell$.  For odd $\ell$, $G_\ell$ is cyclic and therefore
  so is $G_\ell/H^+$; for $\ell=2$, since $-1\in H^+$, $G_2/H^+$ is
  also cyclic.  

  Note also that $H^+$ is the product of $H$ and a group of exponent
  2, namely the subgroup of $G$ generated by $-1$ or by $-1$ and
  $\frac12d+1$.  Also, we have assumed that $n=|H|$ is even.  If $\ell$
  is odd, then $G_\ell$ is cyclic of even order, so has a unique
  element of order 2.  It follows that the order of the image of $H^+$
  in $G_\ell$ divides $n$.

  We define three sets of odd primes:
$$S_1=\left\{\text{odd }\ell: \ell\mid d,~ G_\ell/H^+=\{1\}\right\},$$
$$S_2=\left\{\text{odd }\ell:\ell\mid d,~\varphi(\ell^{e_\ell})\mid n\right\},$$
and
$$S_3=\left\{\text{odd }\ell:\varphi(\ell)\mid n\right\}.
$$
Note that $S_1\subset S_2\subset S_3$ and $S_3$ depends only on $n$,
not on $d$.

If $\ell$ is odd, $\ell\mid d$, and $\ell\not\in S_1$, then
$G_\ell/H^+$ is non-trivial.  Thus choosing a suitable $\psi$, we may
arrange that the conductor of $\chi_1=\chi_0\psi$ is divisible by
every prime dividing $d$ which is not in $S_1$.

For the odd primes $\ell$ which divide $d$ and do not divide the
conductor of $\chi_1$ (a subset of $S_1$, thus also a subset of
$S_3$), we must arrange that $\chi'(\ell)\neq1$ (where $\chi'$ is the
primitive character inducing $\chi$).

Recall that $G_\ell/H^+$ is cyclic.  We now remark that if $C$ is a
cyclic group and $a\in C$, and $z\in\C$, then the set of characters
$\psi:C\to\C$ such that $\chi(a)\neq z$ has cardinality at least
$|C|(1-1/|\langle a\rangle|)$ (where $|\langle a\rangle|$ is the order
of $a$).  If we have several elements $a_1,\dots,a_n$ and several
values $z_1,\dots,z_n$ to avoid, then the number of characters $\psi$
such that $\psi(a_i)\neq z_i$ is at least 
$$|C|\left(1-\frac{1}{|\langle a_1\rangle|}-\cdots-\frac{1}{|\langle a_n\rangle|}\right).$$
Thus we can find such a character
provided that each $a_i$ has
order $>n$.

Now we use that $d$ is large to conclude that a large prime power
$\ell^e$ divides $d$.  (Note that $\ell$ might be 2 here.)  Then
$G_\ell/H^+$ is a cyclic group in which the order of each prime in $S_1$
is large. (The primes in $S_1$ are also in $S_3$, so belong to a set fixed
independently of $d$.)  We want a character $\psi$ of $G_\ell/H^+$
which satisfies $\psi(r)\neq\chi_1^{-1}(r)$ for all $r\in S_1$.  We also
want $\psi\chi_1$ to have non-trivial $\ell$ component which, phrased
in the language above, means that we want $\psi(a)\neq1$ for some
fixed generator of $G_\ell/H^+$.  Since the size of $S_1$ is fixed
depending only on $n$, the discussion of the previous paragraph shows
that these conditions can be met if $\ell^e$ is large enough.

Setting $\chi=\psi\chi_1$ with $\psi$ as in the previous paragraph
yields a character $\chi$ such that $c_\chi\neq0$, and this completes
the proof.
\end{proof}

\begin{proof}[Proof of Theorem~\ref{thm:order2}]
We retain the concepts and notation of the proof of
Theorem~\ref{thm:small-order}.  We also say that a subgroup
of order 2 is ``exceptional'' if it does not contain $-1$ or $\frac12d+1$.

Since $n=2$, the set $S_3=\{3\}$ and the set $S_2$ is either empty (if
$3\nmid d$ or $9\mid d$) or $S_2=\{3\}$ (if $3$ exactly divides $d$).  If
$S_2$ is empty and $H$ is an exceptional subgroup of order 2, then the
first part of the proof of Theorem~\ref{thm:small-order} provides a
primitive odd character trivial on $H$, and so $H$ is not balanced.

Suppose we are in the case where $3$ exactly divides $d$.  Following
the first part of the proof of Theorem~\ref{thm:small-order}, we have
a character $\chi_1$ of $G$ with conductor divisible by $d'=d/3$ which
is odd, trivial on $H$, and, if $4\mid d$, satisfies $\chi_1(\frac12d+1)=-1$.
If the conductor of $\chi_1$ is $d$ or if the primitive character
$\chi'$ inducing $\chi_1$ has $\chi'(3)\neq1$, then setting
$\chi=\chi_1$ we have $c_\chi\neq0$ and we see that $H$ is not
balanced.

If not, we will modify $\chi_1$.  Note that if $\ell=2$ and $16\mid
d$, or $\ell=5$ and $25\mid d$, or $\ell$ is a prime $\ge7$ and
$\ell\mid d$, then the order of $3$ in $G_\ell/H^+$ is at least 3.
Thus in these cases, there is a character $\psi$ of $G_\ell/H^+$ so
that the $\ell$ part of $\chi=\chi_1\psi$ is non-trivial and so that
the primitive character $\chi'$ inducing $\chi$ satisfies
$\chi'(3)\neq1$.  Then $c_\chi\neq0$ and $H$ is not balanced.

This leaves a small number of values of $d$ to check for exceptional
balanced subgroups of order $2$.  Namely, we just need to check
divisors of $8\cdot3\cdot5=120$ which are divisible by $3$.  A quick
computation which we leave to the reader finishes the proof.
\end{proof}

\section{Distribution of numbers $d$ with $\langle p\rangle_d$ balanced}
Fix an integer $p$ with $|p|>1$.  
In our application to elliptic curves,
$p$ is an odd prime number, but it seems interesting to state
our results on balanced subgroups in a more general context.
Let $\B_p$ denote the set of integers $d>2$ coprime to $p$
for which $\langle p\rangle_d$ is a balanced subgroup of $\U_d$.
Further, let
\begin{align*}
\B_{p,0}&=\{d>2:(d,p)=1,~4\mid d,~\textstyle{\frac12}d+1\in\langle p\rangle_d\},\\
\B_{p,*}&=\B_p\setminus\B_{p,0},\\
\B_{p,1}&=\{d>2:(d,p)=1,~-1\in\langle p\rangle_d\}.
\end{align*}
Note that if $p$ is even then $\B_{p,0}$ is empty.
For any set $\A$ of positive integers and $x$ a real
number at least~1, we let $\A(x)=|\A\cap[1,x]|$.


We state the principal results of this section, which show that when
$p$ is odd, most members of $\B_p$ lie in $\B_{p,0}$.
\begin{thm}
\label{thm:dist0}
For each odd integer $p$ with $|p|>1$, 
there are positive numbers $b_p,b_p'$ with
$$
b_p\frac{x}{\log\log x}\le\B_{p,0}(x)\le b_p'\frac{x}{\log\log x}
$$ 
for all sufficiently large numbers $x$ depending on the choice of $p$.
\end{thm}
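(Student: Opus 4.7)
The plan is to reformulate membership in $\B_{p,0}$ in terms of multiplicative orders, then parametrize by the $2$-adic part of $d$ and reduce to counting integers whose prime factors avoid a sparse set of primes.

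First, when $4\mid d$ and $(d,p)=1$, the element $\frac{d}{2}+1$ is the unique nontrivial element of $\U_d$ congruent to $1$ modulo $d/2$. Since $p^{l_p(d/2)}$ lies in the kernel of $\U_d\to\U_{d/2}$, it equals either $1$ or $\frac{d}{2}+1$ in $\U_d$; it is the latter iff $l_p(d)\ne l_p(d/2)$, i.e.\ iff $l_p(d)=2\,l_p(d/2)$. Hence
\[ d\in\B_{p,0}\iff 4\mid d,\ (d,p)=1,\ \text{and}\ l_p(d)=2\,l_p(d/2). \]

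Writing $d=2^am$ with $a\ge 2$, $m$ odd, $(m,p)=1$, and setting $g(a):=v_2(l_p(2^a))$, comparing $v_2$ of $\operatorname{lcm}(l_p(2^a),l_p(m))$ with its analogue at $a-1$ turns the doubling condition into the pair ``$g(a)-g(a-1)=1$ and $v_2(l_p(m))\le g(a)-1$''. Lifting-the-exponent makes $g$ explicit: past a $p$-dependent threshold, $g(a)=a-\kappa$ with $\kappa=v_2(p^2-1)-1$. Setting $j:=g(a)-1$ and
\[ M_j(y):=\#\bigl\{m\le y:m\text{ odd},\,(m,p)=1,\,v_2(l_p(m))\le j\bigr\}, \]
the parametrization yields $\B_{p,0}(x)=O_p(1)+\sum_{j\ge 0}M_j\bigl(x/2^{j+\kappa+1}\bigr)$.

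The condition $v_2(l_p(m))\le j$ is multiplicative in $m$, requiring every prime divisor $q$ of $m$ to lie in $P_j:=\{q\nmid 2p:v_2(l_p(q))\le j\}$. A Chebotarev-density calculation, summed over the possible $2$-parts of $q-1$, shows $P_j$ has natural density $\delta_j$ with $1-\delta_j\asymp 2^{-j}$. A Wirsing/Landau--Selberg--Delange-type estimate then yields $M_j(y)\asymp y(\log y)^{-(1-\delta_j)}$; the total $\sum_{j\ge 0}(x/2^{j+\kappa+1})(\log x)^{-c/2^j}$ is comparable via a Riemann-sum argument to $x\int_0^1(\log x)^{-cu}\,du\asymp x/\log\log x$, giving the claimed order. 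For the lower bound one in fact need not sum: a single $j$ near $\log_2\log\log x$ already produces $M_j(y)\gg y$ for $y\asymp x/\log\log x$.

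The main obstacle is securing the Wirsing-type estimate for $M_j(y)$ \emph{uniformly} in $j$, particularly in the critical regime $j\asymp\log_2\log\log x$ where $1-\delta_j\asymp 1/\log\log y$; classical results furnish an asymptotic for each fixed $j$, but explicit uniform control on the implied constants is what is needed. A Hal\'asz-type upper bound on one side, and a direct sieve construction producing squarefree $m$'s with $\omega(m)\asymp\delta_j\log\log y$ drawn from $P_j$ on the other, should supply the required estimates.
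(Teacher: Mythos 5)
Your reduction is sound and is essentially the one in the paper: the criterion $d\in\B_{p,0}\iff 4\mid d,\ (d,p)=1,\ l_p(d)=2\,l_p(d/2)$, made explicit by lifting the exponent, is equivalent to Proposition~\ref{prop-crit}, and your parametrization $d=2^{j+\kappa+1}m$ with $v_2(l_p(m))\le j$ plays the role of the paper's $f_p(m)=v_2(p^{l_p(m)}-1)$. (Two bookkeeping slips, both harmless for the order of magnitude: the generic range should start at $j\ge1$, since $a=\kappa+1=v_2(p^2-1)$ is admissible only when $p\equiv1\pmod 4$, and the exceptional exponent $a=1+v_2(p-1)$ with $l_p(m)$ odd is absent from your formula; both contribute only $o(x/\log\log x)$. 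You also tacitly assume $p$ is not a square when invoking the density $1-\delta_j\asymp 2^{-j}$, as does the paper.)

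The genuine gap is the estimate $M_j(y)\asymp y(\log y)^{-(1-\delta_j)}$ uniformly in $j$: you correctly identify it as the crux and then leave it at ``should supply.'' In the critical range $1-\delta_j\asymp1/\log\log y$ a uniform two-sided Wirsing/Selberg--Delange asymptotic is genuinely delicate (the constants depend on $j$), and nothing in your sketch establishes it. The key realization, which is how the paper proceeds, is that no such asymptotic is needed; one-sided bounds of elementary provenance suffice. For the upper bound: $v_2(l_p(m))\le j$ forces $m$ to avoid every prime $r$ with $(p/r)=-1$ and $2^{j+1}\mid r-1$ (for such $r$, $v_2(l_p(r))=v_2(r-1)>j$); by \eqref{eq-recipsum} and quadratic reciprocity these primes have reciprocal sum $\tfrac{1}{2^{j+1}}\log\log x+O_p(j/2^j)$, so the Halberstam--Richert bound (Proposition~\ref{prop-sieve}) gives $M_j(y)\ll_p y\exp(-\log\log x/2^{j+1})$ uniformly for $2^j\le\log\log x$, while the tail $2^j>\log\log x$ is handled by the trivial bound $M_j(y)\le y$ together with $\sum_{2^j>\log\log x}x/2^{j}\ll x/\log\log x$; summing over $j$ yields $O_p(x/\log\log x)$. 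For the lower bound, your single-$j$ idea needs only a first-moment (union) bound, not a sieve construction: with $2^{j}\asymp\log\log x$, the primes $r\le x$ with $2^{j+1}\mid r-1$ have reciprocal sum $<\tfrac13$ by \eqref{eq-recipsum}, so a positive proportion of $m\le x/2^{j+\kappa+1}$ coprime to $2p$ avoid them all, and each such $m$ has $v_2(l_p(m))\le v_2(\lambda(m))\le j$. With these two steps written out, your argument coincides with the paper's proof, up to the cosmetic difference that the paper runs the upper bound through $\B_{p,0}(x)-\B_{p,0}(x/2)$ (distinct odd parts) rather than your direct, injective sum over $j$.
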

\begin{thm}
\label{thm:dist1}
For each integer $p$ with $|p|>1$, there is a number $\epsilon_p>0$
such that for all $x\ge3$,
$$
\B_{p,*}(x)=O_p\left(\frac{x}{(\log x)^{\epsilon_p}}\right).
$$
\end{thm}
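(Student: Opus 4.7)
The plan is to split $\B_{p,*}$ into two pieces according to whether $-1\in\langle p\rangle_d$, and to bound each by combining Theorem~\ref{thm:char-criterion} and Proposition~\ref{prop:non-vanishing} with a Wirsing--Landau counting estimate for integers whose prime factors are confined to a subset of the primes of density $c<1$.

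For the piece $\B_{p,*}\cap\B_{p,1}$ (so $-1\in\langle p\rangle_d$ but $d\notin\B_{p,0}$), write $d=2^a m$ with $m$ odd.  A direct CRT computation in $\U_{2^a}\times\U_m$ shows that $-1\in\langle p\rangle_d$ forces $a$ to be bounded by a constant depending on $p$ (essentially $v_2(p+1)$ once $a\ge3$), a compatible congruence on $p\bmod 2^a$, and, most importantly, that the 2-adic valuations $v_2(l_p(\ell))$ for all odd primes $\ell\mid m$ take a common positive value~$v$.  For each fixed $v$, the set of primes $\ell$ with $v_2(l_p(\ell))=v$ has a density $c_v<1$ by a Hasse-type (Chebotarev on a Kummer extension) argument, so Wirsing--Landau gives $O(y(\log y)^{c_v-1})$ for the count of $m\le y$ whose prime factors all lie in this set.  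Summation over $v$ and the admissible $a$ then yields the required bound $O_p(x/(\log x)^{\epsilon_p})$.

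For the piece with $-1\notin\langle p\rangle_d$, Theorem~\ref{thm:char-criterion} together with balance forces $c_\chi=0$ for every odd character $\chi$ of $\U_d$ trivial on $\langle p\rangle_d$, and such characters do exist since $-1\notin\langle p\rangle_d$.  A $\chi$ primitive mod $d$ would give $c_\chi\ne0$ by Proposition~\ref{prop:non-vanishing}, so no such primitive $\chi$ can exist; applying Proposition~\ref{prop:non-vanishing}(ii) to each divisor $d'\mid d$ shows further that for every primitive odd $\chi'$ mod $d'$ with $\chi'(p)=1$ there is an odd prime $\ell\mid d$, $\ell\nmid d'$, with $\chi'(\ell)=1$.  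Decomposing $\U_d\cong\prod_\ell G_\ell$ and tracking the parities of the discrete logarithms of $p$ and $-1$ on each cyclic factor (with $\ell=2$ handled separately when $8\mid d$) collapses these non-existence statements into rigid local conditions: either every odd prime $\ell\mid d$ satisfies $2\mid l_p(\ell)$, or the prime factors of $d$ must sit in specific cosets of subgroups of $\U_{d/\ell_0}$ for each ``exceptional'' $\ell_0\mid d$.  In either sub-case the prime divisors of $d$ are confined to a subset of primes of density $c<1$, and Wirsing--Landau again closes the bound.

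The main obstacle is this second piece: turning the character-theoretic non-existence of primitive odd $\chi$ trivial on $\langle p\rangle_d$, and its imprimitive analogues coming from Proposition~\ref{prop:non-vanishing}, into a clean combinatorial condition on the prime divisors of $d$ that is uniform enough to feed into Wirsing--Landau with a single exponent $\epsilon_p$.  A separate treatment is needed for the 2-part of $d$ and for small odd primes $\ell$ whose character group $\hat G_\ell$ has too few elements to admit the generic construction, and the conditions from primitive characters of every conductor $d'\mid d$ must be aggregated without losing the $(\log x)^{-\epsilon_p}$ saving.
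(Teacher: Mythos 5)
Your first piece is fine (indeed $\B_{p,*}\cap\B_{p,1}\subseteq\B_{p,1}$, so one can simply invoke Theorem~\ref{thm-moree}, or reprove its upper bound by the argument you sketch), and the counting tool you propose (Wirsing--Landau) is interchangeable with the sieve bound the paper uses (Proposition~\ref{prop-sieve}). But the second piece, which carries essentially all the content of the theorem, has a genuine gap, and you say so yourself: the passage from ``every odd character trivial on $\langle p\rangle_d$ has $c_\chi=0$'' to a usable condition on the prime divisors of $d$ is never carried out. The dichotomy you assert (``either every odd prime $\ell\mid d$ has $2\mid l_p(\ell)$, or the prime factors of $d$ sit in specific cosets of subgroups of $\U_{d/\ell_0}$'') is not proved, and its second branch is not even of the right shape for your counting step: Wirsing--Landau requires the admissible set of primes to be fixed in advance, whereas cosets of subgroups of $\U_{d/\ell_0}$ vary with the integer $d$ being counted, so no single exponent $\epsilon_p$ comes out. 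Moreover it is unclear that any clean characterization of balanced $d$ along these lines exists (the sporadic balanced subgroups in Theorem~\ref{thm:order2}, e.g.\ $d=24$, $h\equiv19$, already show the first branch alone is not forced), so the reduction you hope for would need real work, not just bookkeeping of discrete-logarithm parities.

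The paper avoids trying to characterize $\B_{p,*}$ at all. Instead it proves a one-way statement, Proposition~\ref{prop:crit}: if $4\mid l_p(d)$ and $d$ has an odd prime divisor $s$ with $l_p(s)\equiv2\pmod4$ and one $t$ with $l_p(t)$ odd, both with $\langle p,-1\rangle$ a proper subgroup, then $d\notin\B_{p,*}$. The proof is an explicit multi-step construction of an odd character trivial at $p$ whose conductor contains every odd prime of $d$ and whose $2$-part is controlled, so that Proposition~\ref{prop:non-vanishing} applies directly (condition (ii) is vacuous because no odd prime of $d$ is missed by the conductor); the Type~3 prime is used to correct $\chi(p)$ from $-1$ to $1$ and the Type~4 prime to force $\chi(-1)=-1$. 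The counting step is then a single sieve: primes $r\equiv1\pmod4$ with $p$ a nonresidue (forcing $4\mid l_p(d)$), primes $r\equiv5\pmod8$ with $p$ a square but not a fourth power (Type~3), and primes $r\equiv5\pmod8$ with $p$ a fourth power (Type~4) each have positive density, so the $d\le x$ missing one of them number $O_p(x/(\log x)^{1/16})$ when $p$ is neither a square nor twice a square; no separate treatment of $-1\in\langle p\rangle_d$ is needed. To repair your proposal you would need to supply an analogue of this character construction (or some other verifiable sufficient criterion for non-membership in $\B_{p,*}$ triggered by a fixed, positive-density set of primes); as written, the ``main obstacle'' you flag is exactly the theorem's core.
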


We remark that $\B_{p,1}$ has been studied by Moree.
In particular we have the following result.
\begin{thm}[{\cite [Thm.~5]{Moree}}]
\label{thm-moree}
For each integer $p$ with $|p|>1$ there are positive numbers $c_p,\delta_p$
such that
$$
\B_{p,1}(x)\sim c_p\frac{x}{(\log x)^{\delta_p}},~x\to\infty.
$$
\end{thm}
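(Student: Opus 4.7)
The plan is to apply Wirsing's theorem on mean values of non-negative multiplicative functions (or, more precisely, the Selberg--Delange method) to an indicator-type function that detects membership in $\B_{p,1}$, after reformulating the condition prime-power by prime-power.  Using $\U_d\cong\prod_{\ell^e\exdiv d}\U_{\ell^e}$ and that $-1$ corresponds to $(-1,\dots,-1)$, one sees that $-1\in\langle p\rangle_d$ iff there is a single exponent $k$ with $p^k\equiv-1\pmod{\ell^e}$ for every $\ell^e\exdiv d$.  For each odd $\ell$, $\U_{\ell^e}$ is cyclic and the set of local solutions is either empty (when $\mathrm{ord}_{\ell^e}(p)$ is odd) or a single residue class $n_\ell/2\pmod{n_\ell}$ with $n_\ell=\mathrm{ord}_{\ell^e}(p)$.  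A short CRT compatibility argument then shows that a common $k$ exists precisely when $v_2(\mathrm{ord}_{\ell^e}(p))$ takes a single positive value $a$ independent of $\ell^e\exdiv d$, together with a very mild compatibility condition on the odd parts of the $n_\ell$ that is generically satisfied.  The prime $\ell=2$ (where $\U_{2^e}$ is not cyclic for $e\ge 3$) is handled by a separate case analysis.

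Next, for each integer $a\ge 1$ and each prime $q\nmid 2p$, Chebotarev's density theorem applied to the Kummerian extensions $\mathbb{Q}(\zeta_{2^{a+1}})$ and $\mathbb{Q}(\zeta_{2^{a+1}},p^{1/2^{a}})$ of $\mathbb{Q}$ computes the density $\delta_a$ of primes $q$ with $v_2(\mathrm{ord}_q(p))=a$.  These densities are rational and depend explicitly on the 2-adic valuations of $p\pm 1$ and on whether $p$ is a perfect square.  For $\ell$ odd the asymptotic relation $\mathrm{ord}_{\ell^e}(p)=\ell^{e-1}\mathrm{ord}_\ell(p)$ holds outside a finite set of Wieferich-type exceptions, so the local condition at a prime power reduces to the corresponding condition at the underlying prime $\ell$.

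Then, partition $\B_{p,1}=\bigsqcup_{a\ge 1}\B_{p,1}^{(a)}$ according to the common value $a$.  For each fixed $a$, membership in $\B_{p,1}^{(a)}$ is essentially a multiplicative condition: a prime $\ell$ is ``good'' iff $v_2(\mathrm{ord}_\ell(p))=a$, and the density of good primes is $\delta_a$.  An application of the Selberg--Delange theorem (or Wirsing's theorem) to the indicator multiplicative function then yields $\B_{p,1}^{(a)}(x)\sim c_p^{(a)}\,x/(\log x)^{1-\delta_a}$ for some positive $c_p^{(a)}$.  Summing over $a$ and keeping the term with the largest $\delta_a$ gives Theorem~\ref{thm-moree} with $\delta_p=1-\max_a\delta_a$ and $c_p=c_p^{(a^*)}$, where $a^*$ is the index attaining the maximum.

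The main obstacle is technical bookkeeping.  The compatibility condition on odd parts introduces correlations between prime-power factors of $d$ that are not quite multiplicative, and contributions from $d$ divisible by high prime powers or by the exceptional primes where $\mathrm{ord}_{\ell^e}(p)$ does not grow with $e$ in the expected way need to be absorbed into the error term.  These issues require uniform estimates for partial sums of multiplicative functions with prescribed Dirichlet-series behavior near $s=1$, which form the technical heart of Moree's argument.
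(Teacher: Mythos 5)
You should first note that the paper does not prove this statement at all: it is quoted directly from Moree's paper (Theorem~5 of the cited work), so there is no internal argument to compare against. Your outline is, in substance, the route Moree himself takes: localize the condition $-1\in\langle p\rangle_d$ at prime-power components, compute the densities of the relevant prime sets by Chebotarev in fields of the shape $\mathbb{Q}(\zeta_{2^{a+1}},p^{1/2^a})$ (Hasse's method), stratify by the common $2$-adic valuation $a$, and apply a Wirsing/Selberg--Delange mean-value theorem to the resulting multiplicative indicator, the stratum with the largest prime density giving $\delta_p=1-\max_a\delta_a$. So the strategy is the right one and is faithful to the source; for $p$ an odd prime the maximal density is $1/3$, recovering $\delta_p=2/3$ as stated after the theorem.

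Two corrections, one of which removes your main worry. Your CRT step carries a spurious condition: for odd $d$ coprime to $p$, a common exponent $k$ with $p^k\equiv-1\pmod{\ell^e}$ for every $\ell^e\exdiv d$ exists if and only if $v_2(l_p(\ell^e))$ takes one and the same positive value $a$ on all components --- there is no further constraint on the odd parts. Indeed the system $k\equiv n_i/2\pmod{n_i}$ is solvable iff it is pairwise compatible, and writing $n_i=2^am_i$, $n_j=2^am_j$ with $m_i,m_j$ odd, the difference $n_i/2-n_j/2=2^{a-1}(m_i-m_j)$ is automatically divisible by $2^a\gcd(m_i,m_j)$ because $m_i-m_j$ is even and is a multiple of $\gcd(m_i,m_j)$; when the $2$-adic valuations differ, compatibility fails at the $2$-part of the gcd. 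Consequently your concern that ``correlations between prime-power factors'' break multiplicativity is unfounded: for each fixed $a$ the indicator is exactly multiplicative, which is precisely what legitimizes the Selberg--Delange step. (Similarly, the Wieferich-type caveat is unnecessary here: for odd $\ell$, $l_p(\ell^e)$ differs from $l_p(\ell)$ only by a power of $\ell$, so its $2$-adic valuation never changes with $e$.) What your sketch leaves genuinely unaddressed is where the work lies: uniformity in $a$ when summing the strata (for large $a$ the relevant prime density is $O(2^{-a})$ and a sieve bound in the spirit of Proposition~\ref{prop-sieve} controls the tail), the case analysis at $\ell=2$ and its interaction with the odd part, and the possibility of ties among maximal densities (harmless --- they only change the constant $c_p$, not the shape). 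With the corrected criterion these are bookkeeping, and they constitute the technical content of Moree's proof; as a blind sketch your proposal is acceptable once the spurious odd-part condition is deleted.
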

Note that for $p$ prime we have $\delta_p=\frac23$.
We believe that $\B_{p,0}$ and $\B_{p,1}$ comprise most of $\B_p$
and in fact we pose the following conjecture.
\begin{conj}
\label{conj-mainconj}
For each integer $p$ with $|p|>1$ we have
$$
\B_p(x)=\B_{p,0}(x)+(1+o(1))\B_{p,1}(x),~x\to\infty,
$$
that is, $\B_{p,*}(x)\sim\B_{p,1}(x)$ as $x\to\infty$.
\end{conj}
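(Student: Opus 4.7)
The plan is to separately establish the two inequalities encoded in $\B_{p,*}(x)\sim\B_{p,1}(x)$. Writing
$$\B_{p,*}=(\B_{p,1}\setminus\B_{p,0})\,\cup\,(\B_{p,*}\setminus\B_{p,1}),$$
the lower bound reduces to showing $(\B_{p,0}\cap\B_{p,1})(x)=o(\B_{p,1}(x))$, and the upper bound to showing $(\B_{p,*}\setminus\B_{p,1})(x)=o(\B_{p,1}(x))$.

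For the lower bound I first isolate the case $p\equiv 1\pmod 4$ (and the case of even $p$, where $\B_{p,0}$ itself is empty): when $4\mid d$ and $-1\in\langle p\rangle_d$, reduction modulo $4$ forces $-1\in\langle p\bmod 4\rangle=\{1\}$, a contradiction, so $\B_{p,0}\cap\B_{p,1}=\emptyset$ and there is nothing to prove. For $p\equiv 3\pmod 4$, a member $d$ of the intersection has both $-1$ and $\tfrac12 d+1$ in $\langle p\rangle_d$, whence also their product $\tfrac12 d-1$; so $\langle p\rangle_d$ contains the Klein four-group $\{\pm 1,\pm(\tfrac12 d+1)\}$. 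I would estimate $(\B_{p,0}\cap\B_{p,1})(x)$ by an argument parallel to Moree's derivation of Theorem~\ref{thm-moree}, now with this strictly stronger index condition on $\langle p\rangle_d\subset\U_d$, and expect a saving of a positive power of $\log x$ beyond the density $(\log x)^{-\delta_p}$ of $\B_{p,1}$.

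For the upper bound on $\B_{p,*}\setminus\B_{p,1}$, I would use Theorem~\ref{thm:char-criterion} together with Proposition~\ref{prop:non-vanishing}. For each $d$ in this set, $-1\notin\langle p\rangle_d$ and it is not the case that $4\mid d$ and $\tfrac12 d+1\in\langle p\rangle_d$; yet every odd character $\chi$ of $\U_d$ trivial on $\langle p\rangle_d$ must satisfy $c_\chi=0$. By Proposition~\ref{prop:non-vanishing}, for each such $\chi$, writing $\chi'$ for its primitive inducer of conductor $d'$, either (i) $4\mid d$ and $d/d'$ is even, or (ii) some odd prime $\ell\mid d$ with $\ell\nmid d'$ satisfies $\chi'(\ell)=1$. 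The hypothesis on $d$ prevents (i) from holding for every such $\chi$, so for at least one $\chi$, condition (ii) is forced, producing multiplicative constraints linking $p$ to small prime divisors of $d$. Running this reasoning for a nearly full collection of characters trivial on $\langle p\rangle_d$ yields an overdetermined system of such constraints, which I would convert into a sieve bound strictly sharper than that of Theorem~\ref{thm:dist1}.

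The main obstacle is precisely this last step. Theorem~\ref{thm:dist1} already gives $(\B_{p,*}\setminus\B_{p,1})(x)\ll x/(\log x)^{\epsilon_p}$, but the conjecture demands a saving beyond $(\log x)^{\delta_p}$, and a priori $\epsilon_p$ may be much smaller than $\delta_p$. Closing this gap seems to require ruling out conspiratorial cancellations among the $c_{\chi'}$ for a wide family of primitive characters $\chi'$, or equivalently, quantitative lower bounds on $|c_{\chi'}|$ uniform in the conductor. I expect this to be where genuinely new analytic input is needed, as it cannot be extracted from the character criterion alone.
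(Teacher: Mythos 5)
The statement you have attempted to prove is posed in the paper as a conjecture, not a theorem: the authors prove only the easy inequality $\B_p(x)\ge\B_{p,0}(x)+(1+o(1))\B_{p,1}(x)$, by observing that $\B_{p,0}\cap\B_{p,1}$ contains at most one element, and they leave the matching upper bound --- equivalently $\B_{p,*}(x)\sim\B_{p,1}(x)$ --- open. Your proposal does not close it either: the entire content of the conjecture sits in your final step, bounding $(\B_{p,*}\setminus\B_{p,1})(x)$. The paper's Theorem~\ref{thm:dist1} gives only $O_p\left(x/(\log x)^{\epsilon_p}\right)$ with $\epsilon_p=1/16$ in the generic case, whereas by Theorem~\ref{thm-moree} one has $\B_{p,1}(x)\asymp x/(\log x)^{\delta_p}$ with $\delta_p=2/3$ for $p$ prime; so one needs a saving exceeding $(\log x)^{\delta_p}$, far beyond what the character criterion plus the sieve of Proposition~\ref{prop-sieve} yields, and you yourself concede that this step requires ``genuinely new analytic input.'' A plan whose decisive step is acknowledged to be missing is not a proof; as far as the paper is concerned, no proof of this statement exists.

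Two further points. Your lower-bound half is both flawed and unnecessary. If $d\in\B_{p,0}\cap\B_{p,1}$, then $-1$ and $\frac12 d+1$ are elements of order $2$ in the cyclic group $\langle p\rangle_d$, hence they coincide: a cyclic group cannot contain the Klein four-group you assert it contains in the case $p\equiv3\pmod4$. The coincidence $-1\equiv\frac12 d+1\pmod d$ forces $d=4$, so the intersection has at most one element and $\B_p(x)\ge\B_{p,0}(x)+(1+o(1))\B_{p,1}(x)$ follows at once, with no Moree-type estimate needed --- this is exactly the paper's remark following the conjecture. Finally, in your upper-bound sketch the ``overdetermined system of constraints'' extracted from Proposition~\ref{prop:non-vanishing} is never quantified; the genuine difficulty is to show that the configurations in which \emph{every} odd character trivial on $\langle p\rangle_d$ has $c_\chi=0$ for reasons other than $-1\in\langle p\rangle_d$ or $\frac12 d+1\in\langle p\rangle_d$ are rare on the scale $x/(\log x)^{\delta_p}$, and nothing in the paper or in your outline supplies that.
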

It is easy to see that $\B_{p,1}\cap\B_{p,0}$ has at most one element.
Indeed, the cyclic group $\langle p\rangle_d$ has at most one element
of order exactly 2, so if $d\in \B_{p,1}\cap\B_{p,0} $, then for some
$f$ we have $p^f\equiv-1\equiv \frac12d+1\mod d$ and this can happen only
when $d=4$.  This at least shows that
$\B_p(x)\ge\B_{p,0}(x)+(1+o(1))\B_{p,1}(x)$ as $x\to\infty$.

\bigskip
We now begin a discussion leading to the proofs of
Theorems~\ref{thm:dist0} and \ref{thm:dist1}.
The following useful result comes from \cite[Theorem 2.2]{HR}.
\begin{prop}
\label{prop-sieve}
There is an absolute positive constant $c$ such that for all
numbers $x\ge3$ and any set $\R$ of primes in $[1,x]$, the number
of integers in $[1,x]$ not divisible by any member of $\R$ is at most
$$
cx\prod_{r\in \R}\left(1-\frac1r\right)\le 
cx\exp\left(-\sum_{r\in \R}\frac1r\right).
$$
\end{prop}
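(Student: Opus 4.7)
My plan is to handle the two inequalities separately; the second is immediate from $1 - t \le e^{-t}$ for $t \ge 0$, applied factor-by-factor to $(1-1/r)$. For the first, the key observation is that one may discard the large primes of $\R$ up front: set $\R_1 = \R \cap [1,\sqrt{x}]$ and $\R_2 = \R \cap (\sqrt{x},x]$. Writing $S(x;\mathcal{T}) = \#\{n \le x : \text{no prime in } \mathcal{T} \text{ divides } n\}$, we have $S(x;\R) \le S(x;\R_1)$ trivially, and Mertens' theorem gives $\prod_{\sqrt{x} < p \le x}(1-1/p) \to 1/2$, so $\prod_{r\in\R_2}(1-1/r) \ge c'' > 0$ uniformly for $x \ge 3$. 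Hence $\prod_{r\in\R_1}(1-1/r) \le (1/c'')\prod_{r\in\R}(1-1/r)$, reducing the problem to proving the bound for $\R_1$, whose primes are all $\le \sqrt{x}$.

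For $\R_1$, I would invoke Selberg's $\Lambda^2$ upper-bound sieve. Write $P_1 = \prod_{r \in \R_1} r$, fix a sifting level $z = \sqrt{x}$, and choose real weights $\lambda_d$ supported on squarefree $d \mid P_1$ with $d \le z$ and $\lambda_1 = 1$. Using the inequality $\bigl(\sum_{d \mid (n, P_1),\, d \le z}\lambda_d\bigr)^2 \ge 1$ whenever $(n, P_1) = 1$, expanding the square, summing over $n \le x$, and running the standard Selberg optimization yields
\[
S(x;\R_1) \le \frac{x}{J(z)} + O(z^2), \qquad J(z) = \sum_{\substack{d \le z\\ d \mid P_1}} \frac{\mu^2(d)}{\varphi(d)}.
\]
Since $\mu^2(d)/\varphi(d)$ is multiplicative, the untruncated series equals $\prod_{r\in\R_1}(1-1/r)^{-1}$. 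With $z = \sqrt{x}$ the error $O(z^2) = O(x)$ is comparable to the main term, which is harmless given that the claim only requests a bound up to a constant factor.

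The principal obstacle is the Rankin-type lower bound $J(\sqrt{x}) \gg \prod_{r\in\R_1}(1-1/r)^{-1}$, uniformly over arbitrary subsets $\R_1$ of primes in $[1,\sqrt{x}]$: one must show that truncating the multiplicative series at $\sqrt{x}$ captures a bounded proportion of the full Euler product, regardless of how the primes in $\R_1$ are distributed. This uniform Rankin-type estimate is the technical heart of the Halberstam--Richert result, and is the step I would need to work out carefully; the remaining manipulations are textbook Selberg.
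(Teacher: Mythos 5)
Your handling of the second inequality coincides with the paper's (which notes it follows from $1-\theta<{\rm e}^{-\theta}$); for the first inequality the paper gives no argument at all, simply citing \cite[Theorem~2.2]{HR}, so you are attempting genuinely more than the paper does. Unfortunately, as written your Selberg argument has a real error: with sifting level $z=\sqrt{x}$ the remainder $O(z^2)=O(x)$ is \emph{not} harmless. The bound you must prove is $cx\prod_{r\in\R}(1-1/r)$, and this product is not bounded below by a constant: taking $\R$ to be all primes up to $\sqrt{x}$, Mertens gives $\prod_{r\in\R}(1-1/r)\asymp 1/\log x$, so the target is of size $x/\log x$ and an additive error of size $x$ exceeds it by a factor of $\log x$. ``Up to a constant factor'' licenses a multiplicative constant in front of the product, not an extra additive term of order $x$, so the conclusion does not follow from the displayed estimate with $z=\sqrt{x}$.

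The repair is routine but must be made: lower both the splitting threshold and the sifting level, e.g.\ put $\R_1=\R\cap[1,x^{1/3}]$ (Mertens still gives $\prod_{x^{1/3}<p\le x}(1-1/p)\ge c''>0$, so discarding $\R\setminus\R_1$ costs only a constant) and run Selberg with $z=x^{1/3}$. Then the remainder is $O(x^{2/3})$, while the main term $x/J(z)$ is at least $x\prod_{r\in\R_1}(1-1/r)\gg x/\log x$, so the remainder really is negligible. Separately, the lower bound $J(z)\gg\prod_{r\in\R_1}(1-1/r)^{-1}$, which you explicitly defer, is an essential ingredient and without it the proof is incomplete; it does hold uniformly, and no Rankin shift is needed: write each squarefree $n\le z$ uniquely as $n=ab$ with $a\mid P_1$ and $(b,P_1)=1$, so that $\sum_{n\le z}\mu^2(n)/\varphi(n)\le J(z)\prod_{p\le z,\,p\nmid P_1}(1+\frac{1}{p-1})$, and combine the classical bound $\sum_{n\le z}\mu^2(n)/\varphi(n)\ge\log z$ with Mertens to get $J(z)\ge c'\prod_{r\in\R_1}(1-1/r)^{-1}$. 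With these two adjustments your outline becomes a correct, self-contained proof of the proposition, which is more than the paper itself supplies.
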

Note that the inequality in the display follows immediately from the 
inequality $1-\theta<{\rm e}^{-\theta}$ for every $\theta\in(0,1)$.

For a positive integer $m$ coprime to $p$, recall that $l_p(m)$ denotes the
order of $\langle p\rangle_m$.  If $r$ is a prime,
we let $v_r(m)$ denote that integer $v$ with $r^v\mid m$ and
$r^{v+1}\nmid m$.

We would like to give a criterion for membership in $\B_{p,0}$,
but before this, we establish an elementary lemma.
\begin{lemma}\label{lem-2power}
Let $p$ be an odd integer with $|p|>1$ and let $k,i$ be positive integers.
Then
$$
v_2\left(\frac{p^{2^ik}-1}{p^{2k}-1}\right)=i-1.
$$
\end{lemma}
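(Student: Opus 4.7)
The plan is to exploit the elementary factorization
$$\frac{p^{2^ik}-1}{p^{2k}-1}=\prod_{j=1}^{i-1}\bigl(p^{2^jk}+1\bigr),$$
which one obtains by iterating $x^2-1=(x-1)(x+1)$ starting from $x=p^{2^{i-1}k}$ and telescoping, or equivalently by induction on $i$. Observe that when $i=1$ the product is empty, giving the value $1$ and $v_2=0=i-1$, which handles the base case cleanly.

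For $i\ge 2$, I would then compute the $2$-adic valuation of each factor $p^{2^jk}+1$ individually, for $j=1,\dots,i-1$. The key observation is that since $p$ is odd and $j\ge 1$, the exponent $2^jk$ is even, so $p^{2^jk}=(p^{2^{j-1}k})^2$ is the square of an odd integer. Every odd square is congruent to $1$ modulo $8$, hence
$$p^{2^jk}+1\equiv 2\pmod{8},\qquad\text{i.e.}\qquad v_2\bigl(p^{2^jk}+1\bigr)=1.$$

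Summing over the $i-1$ factors gives $v_2$ of the product equal to $i-1$, which is the claim. The argument is entirely routine; there is no real obstacle beyond correctly setting up the telescoping identity and noting that each factor contributes exactly a single factor of $2$. No hypothesis beyond the oddness of $p$ and $|p|>1$ (to ensure the expressions are nonzero) is needed.
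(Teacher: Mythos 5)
Your proof is correct and matches the paper's argument: the paper uses the same telescoping factorization $(p^{2k}+1)(p^{4k}+1)\cdots(p^{2^{i-1}k}+1)$ and observes each of the $i-1$ factors is $2\pmod 4$ (you note $2\pmod 8$, an equivalent observation since each exponent is even and an odd square is $1\pmod 8$), so each contributes exactly one factor of $2$.
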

\begin{proof}
The result is clear if $i=1$.  If $i>1$, we see that
$$
\frac{p^{2^ik}-1}{p^{2k}-1}=(p^{2k}+1)(p^{4k}+1)\dots(p^{2^{i-1}k}+1),
$$
which is a product of $i-1$ factors that are each 2~(mod~4).
\end{proof}

The following result gives a criterion for membership in $\B_{p,0}$.
\begin{prop}
\label{prop-crit}
Let $p$ be odd with $|p|>1$ and let $m\ge1$ be an odd integer coprime
to $p$.  If $l_p(m)$ is odd, then $2^jm\in\B_{p,0}$ if and only if
$j=1+v_2(p-1)$ or $j>v_2(p^2-1)$.  If $l_p(m)$ is even, then
$2^jm\in\B_{p,0}$ if and only if $j>v_2(p^{l_p(m)}-1)$.
\end{prop}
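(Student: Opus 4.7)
The plan is to reformulate the condition $2^j m\in\B_{p,0}$ via the Chinese Remainder Theorem and then reduce it to a $2$-adic valuation calculation. By definition, $d=2^jm\in\B_{p,0}$ if and only if $j\ge 2$ and some positive integer $k$ satisfies $p^k\equiv 2^{j-1}m+1\pmod{2^jm}$. Since $m$ is odd, the CRT splits this into the pair of conditions $p^k\equiv1\pmod m$ and $p^k\equiv 2^{j-1}+1\pmod{2^j}$. The first is equivalent to $l_p(m)\mid k$, while the second, rewritten as $p^k-1\equiv 2^{j-1}\pmod{2^j}$, is equivalent to the exact equality $v_2(p^k-1)=j-1$. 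So I just need to classify, for each parity of $l_p(m)$, the integers $j$ for which some multiple $k$ of $l_p(m)$ satisfies this valuation condition.

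The next step is to compute $v_2(p^k-1)$ in terms of $v_2(k)$. When $k$ is odd, factoring $p^k-1=(p-1)(1+p+\cdots+p^{k-1})$ shows the second factor is a sum of an odd number of odd terms, hence odd, so $v_2(p^k-1)=v_2(p-1)$. When $k$ is even, write $k=2^ik_0$ with $k_0$ odd and $i\ge1$; factoring $p^{2k_0}-1=(p^{k_0}-1)(p^{k_0}+1)$ and applying the odd case to each factor gives $v_2(p^{2k_0}-1)=v_2(p-1)+v_2(p+1)=v_2(p^2-1)$, and Lemma~\ref{lem-2power} applied to $k_0$ and $i$ then yields
$$v_2(p^k-1)=v_2(p^{2k_0}-1)+i-1=v_2(p^2-1)+v_2(k)-1.$$

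A case analysis on the parity of $l_p(m)$ then finishes the proof. If $l_p(m)$ is odd, write $k=l_p(m)\,t$, so that $v_2(k)=v_2(t)$: taking $t$ odd forces $v_2(p^k-1)=v_2(p-1)$ and hence $j=1+v_2(p-1)$, while taking $t$ even with $v_2(t)=j-v_2(p^2-1)\ge 1$ realizes exactly the values $j>v_2(p^2-1)$; taking the union of these two subcases gives the first assertion. If $l_p(m)$ is even, then $k=l_p(m)t$ is automatically even, and the even-$k$ formula rearranges as
$$v_2(p^k-1)=v_2(p^2-1)+v_2(l_p(m))+v_2(t)-1=v_2(p^{l_p(m)}-1)+v_2(t),$$
which equals $j-1$ for some $t\ge 1$ if and only if $j>v_2(p^{l_p(m)}-1)$. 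The one step requiring a bit of care is the CRT reduction together with the recognition that $p^k\equiv2^{j-1}+1\pmod{2^j}$ is precisely a prescribed exact $2$-adic valuation; everything else is direct manipulation using Lemma~\ref{lem-2power}. The constraint $j\ge2$ from the definition of $\B_{p,0}$ is automatic in each of the cases produced, since $v_2(p^2-1)\ge 3$ and $1+v_2(p-1)\ge 2$ whenever $p$ is odd with $|p|>1$.
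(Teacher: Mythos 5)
Your proof is correct and follows essentially the same route as the paper: both reduce membership of $2^jm$ in $\B_{p,0}$ to the condition that some multiple $k$ of $l_p(m)$ satisfies $v_2(p^k-1)=j-1$, and both evaluate that valuation using Lemma~\ref{lem-2power}. The only difference is presentational — you state the valuation computation as a single closed-form formula in $v_2(k)$ after a CRT reduction, whereas the paper handles the two implications separately by direct congruence manipulations modulo $2^jm$.
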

\begin{proof}
  We first prove the ``only if" part.  Assume that $d=2^jm\in\B_{p,0}$
  and let $f$ be an integer with $p^f\equiv \frac12d+1\pmod d$.  Then
  $l_p(m)\mid f$ so that $j-1=v_2(p^f-1)\ge v_2(p^{l_p(m)}-1)$.  This
  establishes the ``only if" part if $l_p(m)$ is even, and it also
  shows that $j\ge1+v_2(p-1)$ always, so in particular if $l_p(m)$ is
  odd.  Suppose $l_p(m)$ is odd and $1+v_2(p-1)<j\le v_2(p^2-1)$.
  Then $j-1>v_2(p-1)$, so that $l_p(2^{j-1}m)$ is even.  Using
  $l_p(m)$ odd, this implies that $2l_p(m)\mid f$, so that $2^j\mid
  (p^2-1)\mid (p^f-1)$, contradicting $p^f\equiv \frac12d+1\pmod d$.

  Towards showing the ``if" part, let $v=v_2(p^{l_p(m)}-1)$.  We have
  $p^{l_p(m)}-1\equiv 2^vm\pmod{2^{v+1}m}$, so that
  $2^{v+1}m\in\B_{p,0}$.  If $j>v+1$ and $l_p(m)$ is even, then with
  $f=2^{j-v-1}l_p(m)$, Lemma~\ref{lem-2power} implies that
  $p^f-1\equiv2^{j-1}m\pmod{2^jm}$, so that $2^jm\in\B_{p,0}$.  If
  $l_p(m)$ is odd, then $v=v_2(p-1)$, so that $2^{v+1}\in\B_{p,0}$.
  Finally assume that $j>v_2(p^2-1)$ and $l_p(m)$ is odd.  Then
  Lemma~\ref{lem-2power} implies that
  $p^{2^{j-v_2(p^2-1)}l_p(m)}-1\equiv2^{j-1}m\pmod{2^jm}$, so that
  $2^jm\in\B_{p,0}$.  This concludes the proof.
\end{proof}

\begin{proof}[Proof of Theorem~\ref{thm:dist0}]
For $m\ge1$ coprime to $2p$, let
$$
f_p(m):=v_2\left(p^{l_p(m)}-1\right),\quad
f_p'(m):=\max\left\{f_p(m),\,v_2\left(p^2-1\right)\right\}.
$$
Proposition~\ref{prop-crit} implies that if $2^jm\in \B_{p,0}$ with $m$ odd,
then $j>f_p(m)$.  Further, if $(m,2p)=1$ 
then $2^jm\in\B_{p,0}$
for all $j>f_p'(m)$.  

Using this last property, we have $\B_{p,0}(x)$ at least as big as the
number of choices for $m$ coprime to $2p$ with 
$1<m\le x/2^{f_p'(m)+1}$.  
Thus, the lower bound in the theorem will follow if we show that
there are at least $b_px/\log\log x$ integers $m$ coprime to $2p$
with $m\le x/2^{f_p'(m)+1}$.

Let $\lambda(m)$ denote Carmichael's function
at $m$, which is the order of the largest cyclic subgroup of $\U_m$.
Then $l_p(m)\mid\lambda(m)$.  Also, for $m>2$, $\lambda(m)$ is even,
so that 
$$
f_p'(m)\le g_p(m):=v_2\left(p^{\lambda(m)}-1\right).
$$
Thus, the lower bound in the theorem will follow if we show that there are at least
$b_px/\log\log x$ integers $m$ coprime to $2p$ with $m\le x/2^{g_p(m)+1}$.
Using Lemma~\ref{lem-2power}, we have $g_p(m)+1= v_2(\lambda(m))+v_2(p^2-1)$.
Further, it is easy to see that $2^{v_2(p^2-1)}\le2(|p|+1)$, with equality
when $|p|+1$ is a power of 2.

It follows from \cite[Section~2, Remark~1]{amicable} 
that uniformly for all $x\ge3$ and all positive integers $n$,
\begin{equation}
\label{eq-recipsum}
\sum_{\substack{r\le x\\r~{\rm prime}\\n\mid r-1}}\frac1r
=\frac{\log\log x}{\varphi(n)}+O\left(\frac{\log(2n)}{\varphi(n)}\right).
\end{equation}
We apply this with $n=2^{g_0+1}$, where
$g_0$ is the first integer with $2^{g_0}\ge4\log\log x$.  Thus, if
$\R$ is the set of primes $r\le x$ with $v_2(r-1)>g_0$,
we have for $x$ sufficiently large,
$$
\sum_{r\in\R} \frac1r<\frac13.
$$
Let $z=x/(25|p|\log\log x)$.  In $[1,z]$ there are $(\varphi(|p|)/(2|p|))z+O_p(1)$
integers coprime to $2p$.  And for a given value of $r\in\R$ there are 
at most $(\varphi(|p|)/(2|p|))z/r+O_p(1)$ numbers in $[1,z]$ coprime to $2p$
and divisible by $r$.
It follows that for $x$ sufficiently large depending on the choice of $p$,
there are at least
$$
\frac{\varphi(|p|)}{2|p|}z-\frac{\varphi(|p|)}{2|p|}z\sum_{r\in\R}\frac1r
+O_p\left(\sum_{r\in\R}1\right)
>\frac{\varphi(|p|)}{4|p|}z
$$
integers $m\le z$ coprime to $2p$ and not divisible by any prime $r\in\R$.
(We used that $|\R|=O(x/\log x)$ to estimate the $O$-term above.)

It remains to note that if $m\le z$, $m$ is coprime to $2p$, and
$m$ is not divisible by any prime in $\R$, then $v_2(\lambda(m))\le g_0$,
so that 
$$
2^{g_p(m)+1}\le 2^{v_2(\lambda(m))+v_2(p^2-1)}\le 2^{g_0}2^{v_2(p^2-1)}
\le 2^{g_0}\cdot2(|p|+1)\le 2^{g_0}\cdot3|p|<25|p|\log\log x.
$$
Thus, $2^{g_p(m)+1}m\in\B_{p,0}$ and $2^{g_p(m)+1}m\le x$, so that
$$
\B_{p,0}(x)\ge \frac{\varphi(|p|)}{100p^2}\frac{x}{\log\log x},
$$
for $x$ sufficiently large depending on the choice of $p$.
This completes our proof of the lower bound.

For the upper bound, it suffices to show that
$$
N(x):=\B_{p,0}(x)-\B_{p,0}(x/2)=O_p\left(\frac{x}{\log\log x}\right).
$$
(With this assumption, no two numbers $d$ counted can have the same
odd part.)
We shall assume that $p$ is not a square, the case when $p=p_0^{2^j}$
for some integer $p_0$ and $j\ge1$ being only slightly more complicated.
 From Proposition~\ref{prop-crit},
$N(x)$ is at most the number of odd numbers $m$ coprime to $p$
with $m\le x/2^{f_p(m)+1}$.  Let $N_k(x)$ be the number of odd numbers
$m\le x/2^{k+1}$ with $m$ coprime to $p$ and $f_p(m)=k$.  Then
$$
N(x)=\sum_kN_k(x)=\sum_{2^k\le\log\log x}N_k(x)+O\left(\frac{x}{\log\log x}\right).
$$
We now concentrate our attention on $N_k(x)$ with $2^k\le\log\log x$.  
If $f_p(m)=k$, then
$m$ is not divisible by any prime $r$ with $(p/r)=-1$ and $2^{k+1}\mid r-1$.
Then, using \eqref{eq-recipsum} and quadratic reciprocity,
$$
\sum_{\substack{r\le x\\(p/r)=-1\\2^{k+1}\mid r-1\\r~{\rm prime}}}
\frac1r=\frac{\log\log x}{2^{k+1}}+O_p\left(\frac{k}{2^k}\right).
$$
By Propostion~\ref{prop-sieve}, the number of integers $m\le x/2^{k+1}$ not divisible
by any such prime $r$ is at most
$$
O\left(\frac{x}{2^{k+1}}\exp\left(-\sum_r\frac1r\right)\right)
=O_p\left(\frac{x}{2^{k+1}}\exp\left(-\frac{\log\log x}{2^{k+1}}\right)\right).
$$
Summing this expression for $2^k\le \log\log x$ gives $O_p(x/\log\log x)$,
which completes the proof of Theorem~\ref{thm:dist0}.
\end{proof}

\begin{rem}
One might wonder if there is a positive constant $\beta_p$ such that
if $p$ is odd with $|p|>1$, then $\B_{p,0}(x)\sim\beta_px/\log\log x$
as $x\to\infty$.  Here we sketch an argument that no
such $\beta_p$ exists; that is, 
$$
0<\liminf_{x\to\infty}\frac{\B_{p,0}(x)}{x/\log\log x}
<\limsup_{x\to\infty}\frac{\B_{p,0}(x)}{x/\log\log x}<\infty.
$$
First note that but for $O_p(x/(\log x)^{1/2})$ values of $d\le x$
there is a prime $r\mid d$ with $(p/r)=-1$.
(We are assuming here that $p$ is not a square.)  For such values
of $d=2^jm$, with $m$ odd, we have $2\mid l_p(m)$, so that in
the notation above we have $f_p(m)=f'_p(m)\ge3$.  Thus it suffices
to count numbers $2^jm\le x$ with $m$ odd and $j>f_p(m)\ge3$.
Note that
$$
f_p(m)=v_2(l_p(m))+v_2(p^2-1)-1=v_2(l_p(m))+h_p-1,
$$
say.  Further,
$$
v_2(l_p(m))=\max_{r\mid m}v_2(l_p(r)),
$$
where $r$ runs over the prime divisors of $m$.  We have
\begin{multline*}
\{r\hbox{ prime}:v_2(l_p(r))=k\}=\\
\bigcup_{i\ge0}\{r\hbox{ prime}:
v_2(r-1)=k+i,~p\hbox{ is a }2^i\hbox{ power }\kern-8pt\pmod{r}
\hbox{ and not a }2^{i+1}\hbox{ power }\kern-8pt\pmod{r}\}.
\end{multline*}
For $k>(\log\log\log x)^2$, the density of primes $r\equiv1\pmod{2^k}$ is
so small that we may assume that no $d$ is divisible by such a prime $r$.
For $k$ below this bound, the density of primes $r$ with $v_2(l_p(r))=k$
is $1/(3\cdot2^{k-1})$.  Thus, there is a positive constant $c_{k,p}$
with $c_{k,p}\to1$ as $k\to\infty$ such that the density of integers
$m$ coprime to $2p$ and with $f_p(m)<k+h_p$ is asymptotically equal to
$$
c_p(\varphi(2|p|)/(2|p|))\exp(-(\log\log x)/(3\cdot2^k)),
$$
as $x\to\infty$.
Thus, the number of $m\le x/2^{k+h_p}$ coprime to $2p$ and with
$f_p(m)=k+h_p-1$ is asymptotically equal to
$$
c_{k,p}\frac{\varphi(2|p|)}{2|p|}\frac{x}{2^{k+h_p}}
\frac{\log\log x}{3\cdot2^k}\exp\left(-\frac{\log\log x}{3\cdot2^k}\right)
$$
as $x\to\infty$.  This expression then needs to be summed over $k$.
For $k$ small, the count is negligible because of the exp factor.
For $k$ larger, we can assume that the coefficients $c_{k,p}$ are all~1,
and then the sum takes on the form 
$$
\frac{\varphi(2|p|)}{2|p|2^{h_p}}\frac{x}{\log\log x}
\sum_k\frac{(\log\log x)^2}{3\cdot2^{2k}}\exp\left(-\frac{\log\log x}{3\cdot2^k}
\right).
$$
Letting this sum on $k$ be denoted $g(x)$, it remains to note that
$g(x)$ is bounded away from both 0 and $\infty$ yet does not tend to
a limit, cf.~\cite[Theorem~3.25]{Li}.
\end{rem}

To prove Theorem~\ref{thm:dist1}, we first establish the following
result.
\begin{prop}
\label{prop:crit}
Let $p$ be an integer with $|p|>1$.  Let $d$ be a positive integer
coprime to $p$ such that $d$ is divisible by odd primes $s$,
$t$ with 
$$
l_p(s)\equiv2\kern-5pt\pmod4,\quad l_p(t)\equiv1\kern-5pt\pmod2,
\quad
\langle p,-1\rangle_s\ne \U_s,\quad
\langle p,-1\rangle_t\ne \U_t.
$$
Assume that $4\mid l_p(d)$.  Then either $4\mid d$ and
$\frac12d+1\in\langle p\rangle_d$ or
$\langle p\rangle_d$ is not balanced.
\end{prop}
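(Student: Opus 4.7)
\emph{Proof proposal.} By Theorem~\ref{thm:char-criterion}, it suffices to exhibit an odd character $\chi$ of $\U_d$ that is trivial on $\langle p\rangle_d$ and satisfies $c_\chi\neq 0$; the non-vanishing will be certified via Proposition~\ref{prop:non-vanishing}, so we need the primitive character $\chi'$ inducing $\chi$ (of modulus $d'$) to satisfy (i)~$4\nmid d$ or $d/d'$ odd, and (ii)~$\chi'(\ell)\neq 1$ for every odd prime $\ell\mid d$ with $\ell\nmid d'$.

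The hypotheses on $s$ and $t$ supply the essential local ingredients. Since $l_p(s)\equiv 2\pmod 4$, the element $p^{l_p(s)/2}$ is the unique involution of the cyclic group $\U_s$, whence $-1\in\langle p\rangle_s$ and therefore $\langle p,-1\rangle_s=\langle p\rangle_s$; combined with $\langle p,-1\rangle_s\neq\U_s$, this yields a non-trivial (necessarily even) character $\alpha$ of $\U_s$ with $\alpha(p)=1$. Since $l_p(t)$ is odd, $-1\notin\langle p\rangle_t$, and consequently $-1\notin\langle p\rangle_d$; together with $\langle p,-1\rangle_t\neq\U_t$, this produces at least two odd characters $\beta$ of $\U_t$ with $\beta(p)=1$. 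A standard lifting-the-exponent computation gives $v_2(l_p(s^{e_s}))=v_2(l_p(s))=1$ and $v_2(l_p(t^{e_t}))=v_2(l_p(t))=0$, so the assumption $4\mid l_p(d)$ forces a prime $r\mid d$, distinct from $s$ and $t$, for which $v_2(l_p(r^{e_r}))\geq 2$.

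Now suppose the desired conclusion fails: either $4\nmid d$, or $4\mid d$ and $\frac12 d+1\notin\langle p\rangle_d$. In the first case, take $\chi=\alpha\beta$ (trivially extended to $\U_d$); this is an odd character of $\U_d$ trivial on $\langle p\rangle_d$ with conductor $st$, so Proposition~\ref{prop:non-vanishing}(i) is automatic. In the second case, $\frac12 d+1$ represents a non-trivial order-$2$ element of $\U_d/\langle p\rangle_d$, so we may append a primitive even $2$-component $\delta$ of $\U_{2^{e_2}}$ with $\delta(\frac12 d+1)=-1$, choosing $\beta$ among its options so that $\alpha(p)\beta(p)\delta(p)=1$; then $\chi=\alpha\beta\delta$ is again odd, trivial on $\langle p\rangle_d$, and satisfies~(i). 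Condition~(ii) is secured by varying $\alpha$ and $\beta$ within their non-trivial character groups and, if necessary, incorporating a local component $\gamma$ at the auxiliary prime $r$---either drawn from $\widehat{\U_{r^{e_r}}/\langle p\rangle_{r^{e_r}}}$ when the latter is non-trivial, or from $\widehat{\U_{r^{e_r}}}$ itself (compensating for $\gamma(p)\neq 1$ by further adjustment at the other components) otherwise; incorporating $r$ into the conductor removes $r$ from the list of primes to be checked in~(ii).

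The principal obstacle is the simultaneous verification of~(ii) across every odd prime $\ell\mid d$ outside the support of $\chi$: each bad equality $\chi'(\ell)=1$ defines a hyperplane-type constraint on the character choices, and one must show by a union-bound style argument that the combined degrees of freedom provided by $\U_s/\langle p\rangle_s$, by the odd characters of $\U_t/\langle p\rangle_t$, by the $2$-part (in the second case), and by the auxiliary prime $r$ exceed the total defect. This is the step in which the three structural hypotheses on $s,t$ and the divisibility $4\mid l_p(d)$ interact, and working it out cleanly is the delicate part.
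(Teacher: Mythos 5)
There are two genuine gaps, and the first is an outright false step. You claim that $4\mid l_p(d)$ forces an odd prime $r\mid d$, distinct from $s,t$, with $v_2(l_p(r^{e_r}))\ge 2$. Writing $d=2^{e_2}m$ with $m$ odd, the factor $4$ in $l_p(d)$ may come entirely from the $2$-part, i.e.\ $v_2(l_p(2^{e_2}))>v_2(l_p(m))$, in which case no such odd $r$ exists. This case is not a technicality: it is exactly the case in which the other branch of the conclusion occurs, and the paper's proof begins with precisely this dichotomy, showing that if the $2$-part dominates then, with $k=l_p(d)$, one has $p^{k/2}\equiv\frac12d+1\pmod d$, so $\frac12d+1\in\langle p\rangle_d$; only after excluding that may one assume some odd prime power $r_1^{a_1}\mathrel{\|}d$ has $v_2(l_p(r_1^{a_1}))=v_2(k)\ge2$. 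Your construction in the ``second case'' genuinely needs such a component: the primitive $2$-component $\delta$ (forced on you so that $d/d'$ is odd) has $\delta(p)$ equal to a nontrivial $2$-power root of unity in general, possibly of order greater than $2$, and it cannot be removed by varying $\beta$, since every admissible $\beta$ has $\beta(p)=1$; cancelling it requires a component at which the order of $p$ is divisible by a sufficiently high power of $2$, which need not exist at any odd prime when the $2$-part dominates. So as written the argument cannot reach the conclusion in that case, and indeed should not, since the conclusion there is the first alternative rather than non-balancedness.

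The second gap is the step you yourself flag as ``the delicate part,'' namely securing condition (ii) for the odd primes $\ell\mid d$ outside the conductor, and the proposed union bound cannot close it: the number of such primes is unbounded as $d$ varies, while the degrees of freedom at $s$, $t$ and $r$ can be bounded (e.g.\ $[\U_s:\langle p\rangle_s]$ may equal $2$, leaving a single nontrivial $\alpha$). The paper avoids (ii) altogether by making the conductor divisible by every odd prime dividing $d$: at prime powers $r^a\mathrel{\|}d$ with $\langle p,-1\rangle_{r^a}=\U_{r^a}$ it multiplies in the quadratic character, at the remaining prime powers with $\langle p,-1\rangle_{r^a}\ne\U_{r^a}$ it multiplies in nonprincipal characters trivial on $\langle p,-1\rangle_{r^a}$, and it then repairs the accumulated value at $p$ and the parity using three distinguished components: the component $r_1^{a_1}$ with $4\mid l_p(r_1^{a_1})$ (to absorb $\delta(p)$ and the quadratic contributions up to sign), a component at $s$ with $\psi(p)=-1$ (this is where $l_p(s)\equiv2\pmod4$ is really used, forcing that component to be odd and fixing the sign at $p$), and a component at $t$ with $\psi(p)=1$, $\psi(-1)=-1$ (fixing the global parity, as in your use of $\beta$). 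Your use of $s$ merely to supply an even character trivial at $p$ does not exploit the hypothesis on $l_p(s)$ in the way the argument requires, and without the conductor-saturation device the verification of (ii) remains unproven.
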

\begin{proof}
Let $k=l_p(d)$.
First assume that $4\mid d$ and $\frac12d+1\not\in\langle p\rangle_d$.
Let $2^\kappa$ be the
largest power of 2 in $k$.  Write
$d=2^jm$ where $m$ is odd, let $2^{\kappa_1}$ be the power of 2 in $l_p(m)$,
and let $2^{\kappa_2}=l_p(2^j)$.
Then $\kappa=\max\{\kappa_1,\kappa_2\}$.  Suppose that
$\kappa_2>\kappa_1$.  
We have $p^{k/2}\equiv 1\pmod m$ and 
$p^{k/2}\not\equiv1\pmod{2^j}$.  Since $4\mid k$, we have 
$p^{k/2}+1\equiv2\mod4$, and since $p^k-1=(p^{k/2}-1)(p^{k/2}+1)$,
we have $p^{k/2}\equiv1\pmod{2^{e-1}}$.  Thus, $p^{k/2}\equiv\frac12d+1\pmod d$,
contrary to our assumption.
Hence, we may assume that $\kappa=\kappa_1\ge\kappa_2$.  Note that
this inequality holds too in the case that $4\nmid d$, since then, $\kappa_2=0$.

We categorize the odd prime powers $r^a$ coprime to $p$ as follows.
\begin{itemize}
\item{} {\bf Type 1}: $\langle p,-1\rangle_{r^a}=\U_{r^a}$.
\item{} {\bf Type 2}: $\langle p,-1\rangle_{r^a}\ne \U_{r^a}$.
\item{} {\bf Type 3}: It is Type 2 and also $l_p(r^a)\equiv2\pmod4$.
\item{} {\bf Type 4}: It is Type 2 and also $l_p(r^a)$ is odd.
\end{itemize}
By assumption $d$ has at least one Type 3 prime power component
and at least one Type 4 prime power component.
We will show that $\langle p\rangle_d$
is not balanced in $\U_d$.
By Proposition~\ref{prop:non-vanishing}, it is sufficient to exhibit
an odd character $\chi\pmod d$ 
that is trivial at $p$  with
conductor $d'$ divisible by the same odd primes as are in $d$, and with
either $d\equiv2\pmod 4$ or $d/d'$ odd.

Let $r_1^{a_1}\| d$ where the power of 2 in $l_p(r_1^{a_1})$ is $2^{\kappa_1}$.
(Note that $r_1^{a_1}$ cannot be Type 3 nor Type 4, since
we have $\kappa_1=\kappa\ge2$, so that $4\mid l_p(r_1^{a_1})$.)
Consider the Type 1 prime powers in $d$, other than possibly $r_1^{a_1}$
in case it is of Type 1.  For each we take the quadratic character
and we multiply these together to get a character $\chi_1$ whose conductor
contains all of the primes involved in Type 1 prime powers, except possibly
$r_1$.

If $j\le1$, we let $\psi_{2^j}$ be the principal character mod $2^j$.
If $j\ge2$, let $\psi_{2^j}$ be a primitive character mod $2^j$ with
$\psi_{2^j}(p)=\zeta$, a primitive $2^{\kappa_2}$-th root of unity.  
Let $\chi_2=\chi_1\psi_{2^j}$.

We choose a character $\psi_{r_1^{a_1}}$ mod $r_1^{a_1}$ with
$\psi_{r_1^{a_1}}(p)=\chi_2(p)^{-1}$ if $\chi_2(p)\ne1$, and otherwise
we choose it so that $\psi_{r_1^{a_1}}(p)=-1$.  Thus, this character
is non-principal.  Let $\chi_3=\psi_{r_1^{a_1}}\chi_2$.  We now
have $\chi_3(p)=\pm1$.

If $\chi_3(p)=-1$ we use a Type 3 prime power $r_3^{a_3}\| d$ and
choose a 
character $\psi_{r_3^{a_3}}\pmod{r_3^{a_3}}$ with $\psi_{r_3^{a_3}}(p)=-1$.
Let $\chi_4=\chi_3\psi_{r_3^{a_3}}$.  If $\chi_3(p)=1$, we let
$\chi_4=\chi_3$.  We now have $\chi_4(p)=1$.

If $\chi_4(-1)=1$, we use a Type 4 prime power $r_4^{a_4}\|d$ and choose a
character $\psi_{r_4^{a_4}}\pmod{r_4^{a_4}}$ with $\psi_{r_4^{a_4}}(p)=1$
and $\psi_{r_4^{a_4}}(-1)=-1$.  Let $\chi_5=\chi_4\psi_{r^a}$.
If $\chi_4(-1)=-1$, we let $\chi_5=\chi_4$.

All remaining prime powers $r^a$ in $d$ are of Type 2.
For these we take non-principal characters that are trivial on
$\langle p,-1\rangle_{r^a}$, and multiply them in to $\chi_5$ to form $\chi_6$.
This is the character we are looking for, and so $\langle p\rangle_d$ is
not balanced.  This completes our proof.
\end{proof}

\begin{proof}[Proof of Theorem~\ref{thm:dist1}]
In the proof we shall assume that $p$ is neither a square nor
twice a square, showing in these cases that we may take
$\epsilon_p=1/16$.  The remaining cases are done with small adjustments
to the basic argument, but may require a smaller value for $\epsilon_p$.

Let $d\le x$ be coprime to $p$.
The set of primes $r\nmid p$ 
with $r\equiv1\pmod 4$ and for which 
$p$ is a quadratic nonresidue has density $1/4$, and in fact,
the sum of reciprocals of such primes $r\le x$ is $\frac14\log\log x+O_p(1)$. 
(This follows from either \eqref{eq-recipsum} and quadratic reciprocity
or from the Chebotarev density theorem.)
Thus by Proposition~\ref{prop-sieve}, 
the number of integers $d\le x$ not divisible by any of these primes
$r$ is $O_p(x/(\log x)^{1/4})$.  Thus, we may assume that $d$ is divisible
by such a prime $r$ and so that $4\mid l_p(d)$.  

Note that if $r\equiv5\pmod8$ and that $p$ is a quadratic
residue modulo $r$, but not a fourth power, then any $r^a$ is of
Type 3.  The density of these primes $r$ is $1/16$, by the Chebotarev
theorem, in fact, the sum of reciprocals of such primes $r\le x$ is
$\frac1{16}\log\log x+O_p(1)$.  
So the number of values of $d\in[3,x]$ not divisible by at least
one of them is $O_p(x/(\log x)^{1/16})$, using Proposition~\ref{prop-sieve}.  
Also note that if 
$r\equiv5\pmod8$ and $p$ is a nonzero fourth power modulo $r$, then
any $r^a$ is Type 4.  The density of these primes $r$ is also $1/16$,
and again the number of $d\in[3,x]$ not divisible by at least one of them
is $O_p(x/(\log x)^{1/16})$.

Thus, the number of values of $d\le x$ coprime to $p$ and not satisfying
the hypotheses of Proposition~\ref{prop:crit} is $O(x/(\log x)^{1/16})$.
This completes the proof of Theorem~\ref{thm:dist1}.
\end{proof}

\section{The average and normal order of the rank}

In this section we consider the average and normal order of the rank
of the curve $E_d$ given in Theorem~\ref{thm:CHU} as $d$ varies.

It is clear from Theorem~\ref{thm:CHU} that for $q$ odd,
$$\rk E_d(\Fq(u))\le
\begin{cases}
d-2&\text{if $d$ is even}\\
d-1&\text{if $d$ is odd}
\end{cases}$$
with equality when $d\in\B_{p,1}$ and
$q\equiv1\pmod d$.  

For all $q$ and $d>1$, it is known \cite[Prop.~6.9]{Brumer}
that
$$\rk E_d(\Fq(u))\le\frac{d}{2\log_q d}+O\left(\frac{d}{(\log_q d)^2}\right).$$
(Here $\log_q d$ is the logarithm of $d$ base $q$, i.e., $\log d/\log
q$.)  We do not include the details here, but this bound can be proved
directly for the the curves in Theorem~\ref{thm:CHU} using that theorem.
In addition, for $q$ odd, considering values of $d$ of the form 
$q^f+1$ for some positive integer $f$, and using
Theorem~\ref{thm:CHU}, we see that the main term in this inequality is
sharp for this family of curves.

We show below that although the average rank of $E_d(\Fq(u))$ is
large---its average for $d$ up to $x$ is at least $x^{1/2}$---for
``most'' values of $d$ the rank is much smaller.

\begin{thm}
\label{thm:average}
There is an absolute constant $\alpha>\frac12$ with the following
property.  For each odd prime $p$ and finite field $\Fq$ of
characteristic $p$, with $\Fq(u)$ and $E_d$ as in Theorem~\ref{thm:CHU},
we have
$$
x^\alpha\le\frac1x\sum_{d\le x}{\rm Rank}\,E_d(\Fq(u))
\le x^{1-\log\log\log x/(2\log\log x)}
$$
for all sufficiently large $x$ depending on the choice of $p$.
\end{thm}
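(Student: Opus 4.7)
The plan is to use Theorem~\ref{thm:CHU} to rewrite the sum of ranks as a sum over balanced divisors by swapping the order of summation:
\[
S(x):=\sum_{d\le x}\rk E_d(\Fq(u))=\sum_{\substack{e\in\B_p\\2<e\le x}}\frac{\varphi(e)}{l_q(e)}\left\lfloor\frac{x}{e}\right\rfloor.
\]
Both bounds amount to estimating this double sum; the sparse set $\B_p$ controls upper bounds, while ``obvious'' large-rank contributors (divisors of $q^f+1$) drive the lower bound.

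For the lower bound, I would exploit the fact that every divisor $e>2$ of $q^f+1$ lies automatically in $\B_{p,1}\subset\B_p$, with $l_q(e)\mid 2f$. Summing over such $e$ and using $\sum_{e\mid n}\varphi(e)=n$ gives, for each $f$ with $q^f+1\le x$,
\[
\sum_{\substack{e\mid q^f+1\\2<e}}\frac{\varphi(e)}{l_q(e)}\left\lfloor\frac{x}{e}\right\rfloor
\ \ge\ \frac{x}{2f}\sum_{\substack{e\mid q^f+1\\e>2}}\frac{\varphi(e)}{e}-O\!\left(\frac{q^f}{f}\right).
\]
The inner sum factors as $\prod_{\ell\mid q^f+1}(1+v_\ell(q^f+1)(1-1/\ell))$, which on average is at least a power of $\log(q^f+1)$ because the typical $q^f+1$ has $\asymp\log f$ prime factors. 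Summing over $f\le\log_q x$, combining with the density bound $\B_{p,1}(x)\gg x/(\log x)^{2/3}$ from Theorem~\ref{thm-moree}, and collecting also the contributions from multiples of each fixed divisor, should yield $S(x)\gg x^{1+\alpha}$ for some absolute $\alpha>\tfrac12$.

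For the upper bound, bound each summand via $\varphi(e)/l_q(e)\le e/l_q(e)$ and $\lfloor x/e\rfloor\le x/e$, obtaining
\[
S(x)\le x\sum_{\substack{e\in\B_p\\2<e\le x}}\frac{1}{l_q(e)}.
\]
I would split the inner sum at a cutoff $T=T(x)$. For $l_q(e)\le T$, every such $e$ divides some $q^t-1$ with $t\le T$, so there are at most $\sum_{t\le T}d(q^t-1)\le(q^T)^{o(1)}$ such $e$'s, each contributing at most $1$. For $l_q(e)>T$, each summand contributes at most $1/T$, and there are at most $|\B_p\cap[1,x]|=O(x/\log\log x)$ summands by Theorems~\ref{thm:dist0}--\ref{thm:dist1}. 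Optimizing the tradeoff, with $T$ around $\log x\cdot\log\log x/\log\log\log x$, balances the two terms and produces the stated exponent $1-\log\log\log x/(2\log\log x)$.

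The main obstacle is the lower bound: a single $d=q^{\lfloor\log_q x\rfloor}+1$ already contributes $\Theta(x/\log x)$ to $S(x)$, but upgrading this to the polynomial-in-$x$ lower bound $S(x)\ge x^{1+\alpha}$ with $\alpha>\tfrac12$ requires quantitative control on the divisor structure of $q^f+1$ across many $f$. The upper-bound optimization is more routine once one has the sparsity estimates for $\B_p$ from the preceding section, but obtaining the precise exponent $\log\log\log x/(2\log\log x)$ requires careful bookkeeping between the small-$l_q$ and large-$l_q$ regimes.
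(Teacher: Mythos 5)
Your opening reduction $\sum_{d\le x}\rk E_d(\Fq(u))=\sum_{e\in\B_p,\,2<e\le x}\frac{\varphi(e)}{l_q(e)}\lfloor x/e\rfloor$ (up to the coprimality condition on $d$) is fine, but both of your estimates fall quantitatively short of the theorem, and the missing ingredients are exactly the content of \cite[Theorem~1]{PS}, to which the paper reduces the statement (checking that the rank formula here agrees within $O(1)$ with the one in \cite{PS} for $d\in\B_{p,1}$, that the lower-bound argument there uses only such $d$, and that the divisor sum bounds the rank from above for all $d$) rather than proving it from scratch. For the lower bound: if you only use divisors $e$ of the numbers $q^f+1$, you cannot get past $x^{1+o(1)}$. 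There are at most $\sum_{f\le\log_q x}\tau(q^f+1)=x^{o(1)}$ such $e$ in total, and every single term obeys $\frac{\varphi(e)}{l_q(e)}\lfloor x/e\rfloor<x$, so your restricted sum is at most $x^{1+o(1)}$, short of the required $x^{1+\alpha}$ with $\alpha>\frac12$ by a power of $x$; the factor $\sum_{e\mid q^f+1}\varphi(e)/e\le\tau(q^f+1)$, whether a power of $\log$ or not, cannot change this. The appeal to $\B_{p,1}(x)\asymp x/(\log x)^{2/3}$ from Theorem~\ref{thm-moree} does not help either, because membership in $\B_{p,1}$ carries no upper bound on $l_q(e)$, so each such $e$ may contribute only $O(x/e)$. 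What is actually needed---and what \cite{PS} supplies, using only $d\in\B_{p,1}$---is a construction of polynomially many integers $e\le x$ with $-1\in\langle p\rangle_e$ and with $l_q(e)$ of size $x^{o(1)}$, built as products of primes drawn from many \emph{different} numbers $q^f+1$ whose orders have compatible $2$-parts; that construction is the heart of the lower bound and is absent from your sketch.

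The upper bound has the same defect in a different guise. With the only two inputs you use, namely $\#\{e\le x:l_q(e)\le T\}\le\sum_{t\le T}\tau(q^t-1)$ and $\B_p(x)=O(x/\log\log x)$, the divisor-function bound $\tau(q^t-1)\le\exp(O(t/\log t))$ keeps the first term harmless only for $T=O(\log x\log\log x)$, and then the second term is of order at least $x^2/(T\log\log x)\ge x^2/(\log x(\log\log x)^2)$---a polylogarithmic saving only. The theorem demands the superpolylogarithmic saving $x^{\log\log\log x/(2\log\log x)}=\exp\bigl(\tfrac{\log x\,\log\log\log x}{2\log\log x}\bigr)$, which exceeds every fixed power of $\log x$; in particular, with your choice $T\approx\log x\log\log x/\log\log\log x$ the second term is about $x^2\log\log\log x/(\log x(\log\log x)^2)$, vastly larger than $x^{2-\log\log\log x/(2\log\log x)}$, so the two regimes do not balance at the stated exponent. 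Note also that balancedness, and hence the sparsity estimate for $\B_p$, is beside the point for the upper bound: the divisor sum bounds the rank for every $d$, as the paper observes. What is really needed is a normal-order statement of the shape $l_q(e)\ge\exp\bigl((1+o(1))\tfrac{\log x\,\log\log\log x}{2\log\log x}\bigr)$ for all but suitably few $e\le x$, obtained from smooth-number estimates together with the fact that very few primes $\ell$ have small $l_q(\ell)$; this again is the argument of \cite{PS} that the paper simply imports.
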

\begin{proof}
This result follows almost immediately from \cite[Theorem~1]{PS}.
A result is proved there for the average value of the rank of curves
in a different family also parametrized by a positive integer $d$.
Using the notation from the present paper, if $d\in\B_{p,1}$ the
rank of the curve considered in \cite{PS} is within 4 of
\begin{equation}
\label{eq-ranksum}
\sum_{\substack{e\mid d\\e>2}}\frac{\varphi(e)}{l_q(e)}.
\end{equation}
We have $d\in\B_{p,1}$ implies that $e\in\B_{p,1}$ for all $e\mid d$ with
$e>2$.  By Theorem~\ref{thm:CHU}, 
formula \eqref{eq-ranksum} is exactly the rank of $E_d(\Fq(u))$
for $d\in\B_{p,1}$.  Since the proof of the lower bound $x^\alpha$
in~\cite{PS} uses only values of $d\in\B_{p,1}$, we have
the lower bound $x^\alpha$ in the present theorem.  

Since the rank of $E_d(\Fq(u))$ is bounded above by the formula
\eqref{eq-ranksum} whether or not $d$ is in $\B_{p,1}$, and in fact
whether or not $\langle p\rangle_d$ is balanced, the argument given in
\cite{PS} for the upper bound gives our upper bound here.
\end{proof}

\begin{thm}
\label{thm:normal}
For each odd prime $p$ and finite field $\Fq$ of
characteristic $p$, with $\Fq(u)$ and $E_d$ as in Theorem~\ref{thm:CHU},
we have but for $o(x/\log\log x)$ values of $d\le x$ with $d\in\B_p$ that
$$
{\rm Rank}\,E_d(\Fq(u))\ge(\log d)^{(1+o(1))\log\log\log d}
$$
as $x\to\infty$.  Further, assuming the GRH, we have but for $o(x/\log\log x)$
values of $d\le x$ with $d\in\B_p$ that
$$
{\rm Rank}\,E_d(\Fq(u))\le(\log d)^{(1+o(1))\log\log\log d}
$$
as $x\to\infty$.  Assuming the GRH, this upper bound holds but for $o(x)$
values of $d\le x$ coprime to $p$ as $x\to\infty$, regardless of whether
$d\in\B_p$.
\end{thm}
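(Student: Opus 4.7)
The plan is to sandwich the rank using Theorem~\ref{thm:CHU}: for $d\in\B_p$,
$$\frac{\varphi(d)}{l_q(d)}\;\le\;\rk E_d(\Fq(u))\;\le\; T(d)\;:=\;\sum_{\substack{e\mid d\\ e>2}}\frac{\varphi(e)}{l_q(e)},$$
where the lower bound comes from the single term $e=d$ (valid since $\langle p\rangle_d$ is balanced), and the upper bound from dropping the balanced hypothesis (valid for all $d$ coprime to $p$). Both sides have the same normal order, $(\log d)^{(1+o(1))\log\log\log d}$, and this will yield the theorem.

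For the lower bound (unconditional), I would invoke the Erd\H{o}s--Pomerance--Schmutz theorem for the normal order of Carmichael's function: $\lambda(n)=n/(\log n)^{(1+o(1))\log\log\log n}$ outside a set of density zero. Since $l_q(n)\mid l_p(n)\mid\lambda(n)$, the same upper bound applies to $l_q(n)$, giving $\varphi(n)/l_q(n)\ge(\log n)^{(1+o(1))\log\log\log n}$ off that set. To obtain this for all but $o(x/\log\log x)$ values of $d\in\B_p$, I would re-run the sieve from the proof of Theorem~\ref{thm:dist0}, additionally excluding the EPS exceptional set. That sieve produces $\B_{p,0}$ by constraining $d=2^j m$ with $m$ in a positive-density subset of odd integers coprime to $p$, so the additional EPS constraint removes only a negligible fraction and still outputs $\sim b_p x/\log\log x$ good values.

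For the upper bound under GRH, I would apply the Hooley--Kurlberg--Pomerance theorem, which under GRH gives $l_q(n)\ge n/(\log n)^{(1+o(1))\log\log\log n}$ off a density-zero set, hence $\varphi(n)/l_q(n)\le(\log n)^{(1+o(1))\log\log\log n}$ there. For almost all $d$, the divisor count $\tau(d)=(\log d)^{\log2+o(1)}$ is dwarfed by $(\log d)^{\log\log\log d}$, so bounding $T(d)$ term-by-term yields $T(d)\le(\log d)^{(1+o(1))\log\log\log d}$, \emph{provided} every $e\mid d$ is typical in the Kurlberg--Pomerance sense. The joint event can be controlled by a union bound over divisors: the number of $d\le x$ having some exceptional divisor $e$ is $\ll\sum_e x/e$ over exceptional $e$, which is $o(x)$ given the usual quantitative form of the exceptional-set estimate. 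This establishes the second and third assertions.

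The principal obstacle is the quantitative strength needed of the exceptional sets. The standard EPS and Hooley--Kurlberg--Pomerance theorems are qualitative, giving density-zero exception sets, whereas the conclusion \emph{within} $\B_p$ requires exceptions of size $o(x/\log\log x)$, which is smaller than the density $\sim 1/\log\log x$ of $\B_p$ itself. Overcoming this requires either the sharpened quantitative forms of these theorems available in the literature, or---the more robust route I would take---modifying the sieve underlying the proof of Theorem~\ref{thm:dist0} so that the constructed $d\in\B_{p,0}$ are automatically typical for $l_p(\,\cdot\,)$ and $l_q(\,\cdot\,)$ at each prime in their factorization, with negligible loss in the sieve's output. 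This compatibility check is where the bulk of the technical work resides.
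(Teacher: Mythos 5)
Your overall skeleton---sandwiching the rank via Theorem~\ref{thm:CHU} between $\varphi(d)/l_q(d)$ and the divisor sum, using the Erd\H os--Pomerance--Schmutz normal order of $\varphi/\lambda$ for the lower bound and a GRH normal-order result for $\varphi/l_q$ (the relevant reference is \cite{LP}) for the upper bound---is the same as the paper's. But two steps have genuine gaps. First, the assertions restricted to $\B_p$: re-running the sieve from the proof of Theorem~\ref{thm:dist0} with the EPS-exceptional $m$ excluded only shows that \emph{at least} $(b_p-o(1))x/\log\log x$ members of $\B_{p,0}$ satisfy the rank bound. That is not what the theorem claims. Since $\B_p(x)\asymp x/\log\log x$ with different upper and lower constants (and, as the Remark following Theorem~\ref{thm:dist0} shows, $\B_{p,0}(x)/(x/\log\log x)$ does not even tend to a limit), exhibiting a good subset of the right order of magnitude still allows a bad subset of size comparable to $x/\log\log x$; you need an \emph{upper} bound on the exceptional subset of $\B_p$, not a lower bound on a good subset. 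The paper gets this by decomposing $d=2^jm$ with $m$ odd and transferring density-zero statements about odd parts: the two-sided stability estimate \eqref{eq-comp} (and its analogue with $l_q$ in place of $\lambda$), the fact that $v_2(\lambda(m))\le2\log\log\log m$ for almost all odd $m$ (via \eqref{eq-recipsum}), Proposition~\ref{prop-crit}, and Theorem~\ref{thm:dist1} to dispose of $\B_{p,*}$; the constraint $j>f_p(m)$ built into membership in $\B_{p,0}$ is what makes the count of exceptional $d$ come out to $o(x/\log\log x)$ rather than merely $o(x)$. Your sketch never performs this conversion (and appealing to ``sharpened quantitative forms'' of EPS/\cite{LP} with exceptional sets of size $o(x/\log\log x)$ among all integers is not something available off the shelf); note also that your GRH statement within $\B_p$ (the second assertion) needs this same refinement, which your write-up does not address.

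Second, the GRH upper bound on $T(d)=\sum_{e\mid d,\,e>2}\varphi(e)/l_q(e)$. You require every divisor $e\mid d$ to be typical and propose the union bound $\sum_{e\ \mathrm{exceptional}} x/e=o(x)$. A density-zero exceptional set gives no control on its sum of reciprocals: even a quantitative exceptional-set bound like $O(y/f(y))$ with $f$ growing slowly yields reciprocal sums of size roughly $\log x/f(x)$, so $\sum_e x/e$ can be $\gg x$, and this step fails as stated. The paper avoids the issue entirely with the key observation that $\varphi(e)/l_q(e)$ \emph{divides} $\varphi(d)/l_q(d)$ for $e\mid d$, whence $T(d)\le\tau(d)\,\varphi(d)/l_q(d)$; then one needs only $d$ itself to be typical for \cite{LP} under GRH, together with $\tau(d)\le\log d$ for almost all $d$. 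That divisibility is the missing lemma in your argument, and without it (or a genuinely strong quantitative substitute) your union bound does not close.
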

\begin{proof}
For $d\in\B_p$, Theorem~\ref{thm:CHU} implies that the rank of
$E_d(\Fq(u))$ is at least $\varphi(d)/l_q(d)\ge\varphi(d)/\lambda(d)$,
where $\lambda$ was defined in the previous section as the order
of the largest cyclic subgroup of $\U_d$.  It is shown in the proof
of Theorem 2 in \cite{EPS} that on a set of asymptotic density~1,
we have $\varphi(d)/\lambda(d)=(\log d)^{(1+o(1))\log\log\log d}$.
We would like to show this holds for almost all $d\in\B_p$.  Note
that we have $\varphi(m)/\lambda(m)=(\log m)^{(1+o(1))\log\log\log m}$
for almost all odd numbers $m$.  We have for all odd $m$ and
every integer $j\ge0$ that 
\begin{equation}
\label{eq-comp}
\frac{\varphi(m)}{\lambda(m)}\le
\frac{\varphi(2^jm)}{\lambda(2^jm)}
\le 2^j\frac{\varphi(m)}{\lambda(m)}.
\end{equation}
Thus, for almost all odd numbers $m$ we have for all nonnegative integers
$j$ with $2^j\le\log m$ that 
$\varphi(2^jm)/\lambda(2^jm))=(\log(2^jm))^{(1+o(1))\log\log\log(2^jm)}$.
Further, it follows from~\eqref{eq-recipsum} that but for a set of
odd numbers $m$ of asymptotic density~0, we have 
$v_2(\lambda(m))\le2\log\log\log m$.  It thus follows from 
Proposition~\ref{prop-crit} that for almost all odd numbers $m$ there
is some nonnegative $j$ with $2^jm\in\B_p$ and $2^j\le\log m$.
By Theorems~\ref{thm:dist0},~\ref{thm:dist1} almost all members of $\B_p$ are of
this form, and so we have the lower bound in the theorem.

For the upper bound we use an argument in \cite{LP}.  There,
Corollary 2 and the following remark imply that under the assumption
of the GRH, for almost all numbers $d$ coprime to $p$ we have 
$\varphi(d)/l_q(d)=(\log d)^{(1+o(1))\log\log\log d}$.  We use that
$\varphi(e)/l_q(e)\mid\varphi(d)/l_q(d)$ for $e\mid d$ and from the
normal order of the number-of-divisors function $\tau(d)$, that
most numbers $d$ have $\tau(d)\le\log d$.  It thus follows
from Theorem~\ref{thm:CHU} and the GRH that for almost all 
numbers $d$ coprime to $p$ that 
$$
{\rm Rank}E_d(\Fq(u))\le\tau(d)\frac{\varphi(d)}{l_q(d)}
\le(\log d)\frac{\varphi(d)}{l_q(d)}=(\log d)^{(1+o(1))\log\log\log d}.
$$

We would like to show as well that this inequality continues to hold
for almost all $d$ that are in $\B_p$.
As above, the GRH implies that for almost all
odd numbers $m$ coprime to $p$, 
we have $\varphi(m)/l_q(m)=(\log m)^{(1+o(1))\log\log\log m}$.
Since \eqref{eq-comp} continues to hold with $l_q$ in place of $\lambda$,
it follows that for almost all odd $m$ and for all $j$ with $1\le2^j\le\log m$,
that $\varphi(2^jm)/l_q(2^jm)=(\log(2^jm))^{(1+o(1))\log\log\log(2^jm)}$.
Again using the normal order of the number-of-divisors function $\tau$,
we have that for almost all odd $m$ and all $j$ with $1\le2^j\le\log m$
that $\tau(2^jm)\le\log m$.  Further, as we noted above,
from Theorems~\ref{thm:dist0},~\ref{thm:dist1},
it follows that almost all members $d$ of $\B_p$ are of the form $2^jm$ with
$m$ odd and $2^j\le\log m$.  The rank formula in Theorem~\ref{thm:CHU}
implies that the rank of $E_d(\Fq(u))$ is bounded above by
$\tau(d)\varphi(d)/l_q(d)$.  Thus, for almost all $d\in\B_p$ we have
the rank at most $(\log d)^{(1+o(1))\log\log\log d}$.  This
completes the proof.
\end{proof}

\end{document}